\def\ws{\slashed{w}}
\def\vs{\slashed{v}}
\def\Th{{\widehat{T}}}
\def\Cb{{\underline{C}}}
\def\ub{\underline{u}}
\def\wb{\underline{w}}
\def\chih{\widehat{\chi}}
\def\Lb{\underline{L}}
\def\tr{\text{tr}}
\def\diff{\mathrm{d}}
\def\angp{ {\p \mkern-10mu / }}
\def\p{\partial}
\newcommand{\norm}[1]{\left\lVert#1\right\rVert}
\newtheorem*{Main Theorem}{Main Theorem}
\newtheorem*{Main Proposition}{Main Proposition}
\newtheorem{theorem}{Theorem}[section]
\newtheorem{lemma}[theorem]{Lemma}
\newtheorem{corollary}[theorem]{Corollary}
\newtheorem{remark}[theorem]{Remark}
\numberwithin{equation}{section}
\begin{document}
	\title {Constructing characteristic initial data for three dimensional compressible Euler equations}

	\author{
		Yu-Xuan WANG\thanks{School of Mathematics and Statistics, Wuhan University, Wuhan, China. wangyuxu20@mails.tsinghua.edu.cn}, 
		Si-Fan YU\thanks{Department of Mathematics, National University of Singapore, Singapore. sifanyu@nus.edu.sg},
		and Pin YU\thanks{Department of Mathematical Sciences, Tsinghua University, Beijing, China. yupin@mail.tsinghua.edu.cn}}


	
	\maketitle

	\begin{abstract}
		This paper resolves the characteristic initial data problem for the three-dimensional compressible Euler equations — an open problem analogous to Christodoulou's characteristic initial value formulation for the vacuum Einstein field equations in general relativity. Within the framework of acoustical geometry, we prove that for any ``initial cone" $C_0\subset \mathcal{D}=[0,T]\times \mathbb{R}^3$ with initial data $(\mathring{\rho},\mathring{v},\mathring{s})$ 
		given at $S_{0,0}=C_0\cap \Sigma_0$,
		arbitrary smooth entropy function $s$ and angular velocity $\slashed{v}$ determine smooth initial data $(\rho,v,s)$ on $C_0$ that render $C_0$ characteristic. Differing from the intersecting-hypersurface case by Speck-Yu \cite{Speck-Yu} and the symmetric reduction case by Lisibach \cite{CharivpLisibach}, our vector field method recursively determines all (including $0$-th) order derivatives of the solution along $C_0$ via transport equations and wave equations. This work provides a complete characteristic data construction for admissible hypersurfaces in the 3D compressible Euler system, introducing useful tools and providing novel aspects for studies of the long-time dynamics of the compressible Euler flow.
	\end{abstract}
	
	\tableofcontents
	\section{Introduction to the characteristic problem for Euler equations}
	
	We study the Euler system (originally introduced in \cite{Euler1757}) in dimension three which governs the motion of an ideal compressible inviscid fluid in three dimensions:
	\begin{equation}\label{eq: Euler in Euler coordinates}
		\begin{cases}
			(\partial_t + v \cdot \nabla) \rho &= -\rho \nabla \cdot v, 
			\\
			(\partial_t + v \cdot \nabla)v &= -\rho^{-1} \nabla p,\\
			(\partial_t + v \cdot \nabla)s &= 0.
		\end{cases}
	\end{equation} 
	The functions $\rho,p,s$ and $v=v^1\frac{\partial}{\partial x^1}+v^2\frac{\partial}{\partial x^2}+v^3\frac{\partial}{\partial x^3}$ are the density, pressure, entropy and velocity of the gas, respectively. They all depend on the spacetime variables $(t,x^1,x^2,x^3)$.
	Throughout the paper, we choose $\rho$, $v$ and $s$ as independent thermodynamical variables. The pressure $p$ is then a function of the state of the density and the pressure, that is, $p(t,x) = p(\rho(t,x),s(t,x))$ and $p$ is smooth with respect to $\rho$ and $s$. The sound speed $c$ is then a positive function in $\rho$ and $s$ given by the formula $c^2= \dfrac{\partial p}{\partial \rho}(\rho,s)>0$.
	
	\subsection{The acoustical metric and the characteristic hypersurfaces for sound waves}
	If we choose $p$, $v$ and $s$ as the independent thermodynamical variables, the Euler equations can be expressed as
	\begin{equation}\label{eq: Euler in Euler coordinates 2}
		\begin{cases}
			&  \frac{\partial v}{\partial t} +v\cdot \nabla v  +\rho^{-1}\nabla p= 0, \\
			& \frac{\partial p}{\partial t} +(v \cdot \nabla)p=-\rho c^2 (\nabla \cdot v),\\
			&  \frac{\partial s}{\partial t} +(v\cdot \nabla) s=0.
		\end{cases}
	\end{equation}
	We regard the above system as a first-order hyperbolic system and we also assume that the solution exists on a spacetime region $\mathcal{D}\subset \mathbb{R}\times \mathbb{R}^3$. We use $(t,x^1,x^2,x^3, \omega, \xi_1,\xi_2,\xi_3)$ as the standard coordinate systems for $T^*\mathcal{D}$. Therefore, the characteristic polynomial for \eqref{eq: Euler in Euler coordinates 2} is a function defined $T^*\mathcal{D}$ and is a fifth-degree homogeneous polynomial in $(\omega, \xi_1,\xi_2,\xi_3)$. More precisely, it is given by
	\[H(t,x;\omega,\xi)=\left(\omega+v(t,x)\cdot \xi\right)^3 \left((\omega+v(t,x)\cdot \xi)^2-c^2|\xi|^2\right).\]
	The system has two different types of characteristic waves. The first type corresponds to the factor $ \omega+v(t,x)\cdot \xi $ of $H(t,x;\omega,\xi)$. It consists of two families of transverse vorticity wave and one family of entropy waves. The second type corresponds to the factor $(\omega+v(t,x)\cdot \xi)^2-c^2|\xi|^2$ of $H(t,x;\omega,\xi)$, which gives the sound waves.  We then define
	\[H_{_{\rm sound}}(t,x;\omega,\xi)= (\omega+v(t,x)\cdot \xi)^2-c^2|\xi|^2.\]
	In the sequel, the word \emph{characteristic} refers to the characteristic hypersurfaces of the sound waves, i.e., the characteristic hypersurfaces defined by the Hamiltonian $H_{_{\rm sound}}(t,x;\omega,\xi)$. 
	
	We also recall the following definition of the characteristic hypersurfaces. Let $C\subset \mathcal{D}$ be a smooth hypersurface. We assume that $C$ is locally defined as a level set of a smooth function $\phi$ on $\mathcal{D}$ so that the following eikonal equation is satisfied
	\begin{equation}\label{eq: Eikonal sound}
		H_{_{\rm sound}}(d\phi)=0.
	\end{equation}
	
	The spacetime region $\mathcal{D}$ can be equipped with the so-called \emph{acoustical metric} $g$:
	\[ g = - c^2 dt^2 +  \sum_{i=1}^3 (dx^i - v^idt)^2.\]
	This is a Lorentzian metric. The inverse metric on $T^*D$ can then be computed as 
	\[g^{-1} = -\frac{1}{c^2} (\partial_t +\sum_{i=1}^3  v^i \partial_i) \otimes (\partial_t +\sum_{i=1}^3  v^i \partial_i)  +\sum_{i=1}^3 \partial_i \otimes \partial_i.\]
	We recall that the classic Hamiltonian for the geodesic flow of $g$ (on the cotangent bundle of $\mathcal{D}$) is then given by 
	\[H_{_{\rm geodesic}}(\omega,\xi;t,x)=g^{\mu\nu}\xi_\mu\xi_\nu.\]
	where $\xi_\mu =(\omega,\xi_1,\xi_2,\xi_3)$ and we use the Einstein summation convention in the above formula. By the explicit formula of $g^{-1}$, we derive that
	\[H_{_{\rm geodesic}}(\omega,\xi;t,x)=|\omega+v(t,x)\cdot \xi|^2-c^2|\xi|^2.\]
	Therefore, $H_{_{\rm geodesic}}(\omega,\xi;t,x)=H_{_{\rm sound}}(\omega,\xi;t,x)$. In particular,  by \eqref{eq: Eikonal sound}, a characteristic hypersurface $C$ for sound waves is then locally defined by a smooth function $\phi$ on $\mathcal{D}$ so that
	\[
	H_{_{\rm geodesic}}(d\phi)=0.
	\]
	We then conclude that the characteristic hypersurfaces for sound waves are indeed null hypersurfaces for the acoustical metric $g$. These hypersurfaces are ruled by null geodesics. The standard geometric techniques used for null hypersurfaces in general relativity, see \cite{C09,C-K93}, can then be adapted to study sound wave characteristic hypersurfaces.
	
	\subsection{The statement of the problem}\label{SS:Statement}
	
	We consider the following domain $\mathcal{D}=[0,T]\times \mathbb{R}^3\subset \mathbb{R}^4$ where $T\in [0,\infty]$.  We use the standard Cartesian coordinate system $(x^0,x^1,x^2,x^3)$ on $\mathcal{D}$. For $t\in [0,T]$,  we use $\Sigma_t$ to denote the constant slice
	\[\Sigma_t=\big\{(x^0,x^1,x^2,x^3)\in \mathcal{D}\big| x^0=t\big\}.\] 
	Let $C_0$ be a truncated cone depicted in the following picture:
	\begin{center}
		\includegraphics[width=3.5in]{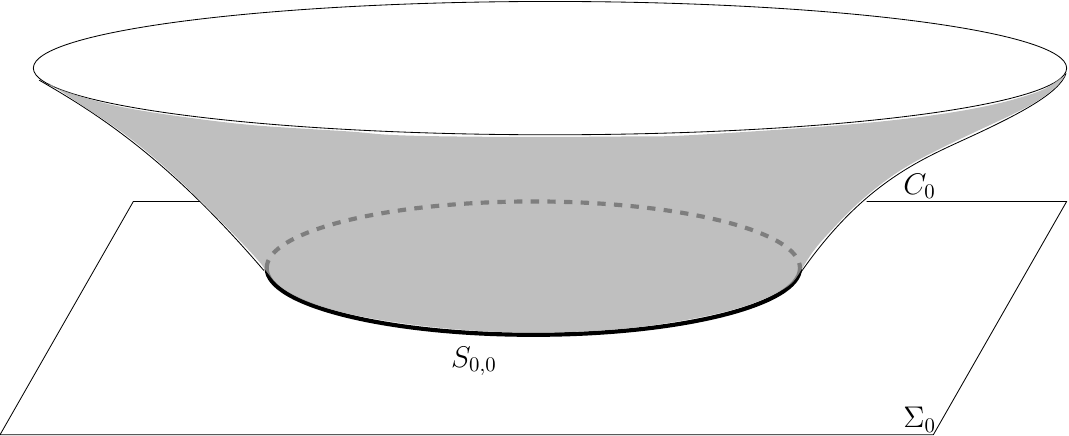}
	\end{center}
	More precisely, a smooth hypersurface $C_0\subset \mathcal{D}$ is called an \emph{initial cone} if it satisfies the following two conditions:
	\begin{itemize}
		\item[1)] For all $t\in [0,T]$,  $C_0$  intersects $\Sigma_t$ transversally;
		\item[2)] For all $t\in [0,T]$, $S_{t,0}:=C_0\cap \Sigma_t$ is a smooth $2$-sphere in $\Sigma_t$.
	\end{itemize}
	The main result of the paper can be stated as follows: 
	\begin{theorem}[Rough version of the main result]\label{TH:RoughMain}
		Given an initial cone  $C_0$ in $\mathcal{D}$, given initial data $(\mathring{\rho},\mathring{v},\mathring{s})$ on $\Sigma_0$ inside $S_{0,0}$,   we can prescribe all possible characteristic initial data $(\rho,v,s)$ on $C_0$, so that 
		\begin{itemize}
			\item[1)]  $(\rho,v,s)$ are smooth functions on $C_0$ ;
			\item[2)] $C_0$ is characteristic hypersurfaces with respect to $H_{_{\rm sound}}(t,x;\omega,\xi)$ for the given $(\rho,v,s)$;
			\item[3)] {$(\rho,v,s)$ agrees with $(\mathring{\rho},\mathring{v},\mathring{s})$ at all orders on $S_{0,0}$.}
		\end{itemize}
	\end{theorem}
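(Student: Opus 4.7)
My plan is to reduce the construction to integrating a triangular hierarchy of transport (and, at higher orders, wave-type) equations along the null generators of $C_0$, with initial conditions at $S_{0,0}$ read off from $(\mathring{\rho},\mathring{v},\mathring{s})$. To this end I first set up an adapted null frame on $C_0$: using the transversality of $C_0$ to each $\Sigma_t$ and the sphere structure of $S_{t,0}$, I choose a smooth vector field $L=\p_t+\ell^i\p_i$ tangent to the generators of $C_0$ and complete it to $(L,\Lb,e_1,e_2)$, where $\Lb$ is the conjugate null vector transverse to $C_0$ (for the yet-to-be-constructed acoustical metric $g$, normalized by $g(L,\Lb)=-2$) and $\{e_A\}$ spans $TS_{t,0}$. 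Decomposing $v=v_L L+v_{\Lb}\Lb+\slashed{v}$ and following the abstract, I prescribe $s$ and $\slashed{v}$ freely on $C_0$ as arbitrary smooth extensions agreeing with $\mathring{s},\mathring{\slashed{v}}$ and all of their tangential derivatives at $S_{0,0}$ (feasible by Borel summation from the interior data).

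Second, I determine the remaining unknowns $(\rho,v_L,v_{\Lb})$ on $C_0$ at zeroth order. Three scalar constraints suffice: the tangency of $L$ to $C_0$ (an algebraic relation between $\ell^i$ and $v^i$), the null condition $g(L,L)=0$ — the characteristic condition itself, coupling $c^2=c^2(\rho,s)$ to $\ell$ and $v$ — and one Euler equation from \eqref{eq: Euler in Euler coordinates 2} projected in a direction non-characteristic for the principal symbol on $C_0$ (for instance the $L$-projected momentum equation), giving a first-order evolution along the generators. Together they form a coupled ODE system along each generator of $C_0$, which I integrate from $(\mathring{\rho},\mathring{v})|_{S_{0,0}}$ to obtain smooth zeroth-order characteristic data.

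Third, to ensure matching at all orders at $S_{0,0}$, I recursively construct the transverse jets $\Lb^k(\rho,v,s)|_{C_0}$ for each $k\geq 1$. Commuting $\Lb^k$ through \eqref{eq: Euler in Euler coordinates 2} produces at each order a transport or wave-type equation along $L$ for $\Lb^k(\rho,v)$ whose source involves only $\Lb^j$-jets with $j<k$, angular derivatives of known quantities, and the already-built zeroth-order data. Initial values at $S_{0,0}$ for this hierarchy are computed by applying $\Lb^k$ to the Euler system on $\Sigma_0$ and substituting $(\mathring{\rho},\mathring{v},\mathring{s})$, making consistency with the interior jets automatic; Borel's theorem then assembles the constructed jets into a smooth $(\rho,v,s)$ on $C_0$.

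The main obstacle I anticipate is the closure of this induction: naively, $[\Lb^k,v\cdot\nabla]$ can generate $\Lb^{k+1}$-derivatives of $v$ on the right-hand side, threatening the triangular structure. Resolving this requires decomposing the acoustical covariant derivatives into their null-frame components (the null second fundamental forms $\chi,\chib$, the torsions $\eta,\etab$, and mass-aspect-type quantities) and exploiting the acoustical null structure equations to show that the dangerous top-order terms either cancel or reduce to tangential $(L,\nablaslash)$-derivatives of lower-order quantities. Carrying out this null-structure analysis in the full three-dimensional setting, without symmetry and on a genuinely characteristic hypersurface, is the technical heart of the ``vector field method'' advertised in the abstract and is what distinguishes this construction from the intersecting-hypersurface approach of \cite{Speck-Yu} and the symmetric-reduction approach of \cite{CharivpLisibach}.
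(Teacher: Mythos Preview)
Your broad outline---prescribe $s$ and $\slashed{v}$ freely, solve transport equations along the generators for the remaining components, then inductively build transverse jets---matches the paper. But two concrete gaps separate your sketch from a working argument.

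First, your transversal vector $\Lb$ is defined via the acoustical metric (through $g(L,\Lb)=-2$), which depends on the very $(\rho,v,s)$ you are constructing; this is circular at zeroth order. The paper avoids this by using $\Th$, the Euclidean unit normal to $S_{t,0}$ inside $\Sigma_t$: since $g|_{\Sigma_t}$ is the flat metric regardless of the fluid, $\Th$, $\slashed{g}$, and the second fundamental form $\theta$ are determined by the hypersurface $C_0$ alone, before any data are built. Relatedly, your ``three scalar constraints'' do not close as stated. Tangency of $L$ to $C_0$ is the \emph{definition} of $L$, not a constraint on the fluid, and a generic $L$-projected momentum equation will contain transversal derivatives of $\rho$ or $v$. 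The paper's mechanism is specific: the characteristic condition becomes the single algebraic relation $-v^{\Th}+c=\mathbf{V}$, where $\mathbf{V}$ is the speed function read off from $C_0$ (Lemma~\ref{lem: being characteristic}); introducing Riemann invariants $w,\wb$ turns this into $w=w(\wb)$; and the decisive structural fact is that the transport equation \eqref{eq:Euler constraints C} for $L(\wb)$ has \emph{no} transversal derivative of $w$ or $\wb$ on its right side, while $\Th(s)$ is eliminated via $L(s)=-c\Th(s)$. This yields a genuinely closed scalar ODE \eqref{eq:final equation along wb} for $\wb$ along each generator. Without isolating this Riemann-invariant cancellation, your zeroth-order step does not close.

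Second, for higher transverse jets the paper does not commute $\Lb^k$ through the first-order Euler system. It instead derives a covariant wave equation $\Box_g\rho=\cdots$ \eqref{eq: box rho}, decomposes $\Box_g$ in the frame $(L,T,X_1,X_2)$ via \eqref{eq: wave in L T}, and couples the resulting $L\Th(\rho)$ equation to the structural transport equation $L\kappa=-Tc+\Th^j T(v^j)$ \eqref{eq: structure equation L kappa}. This gives a closed (nonlinear) ODE system \eqref{eq: ODE system for Th rho and kappa} for the pair $(\Th\rho,\kappa)$; applying $\Th^k$ then yields \emph{linear} ODE systems \eqref{eq: ODE system for Th k Th rho and kappa} for $(\Th^{k+1}\rho,\Th^k\kappa)$, solvable on all of $[0,T]$. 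The wave-equation route automatically packages the top-order terms you worry about into the single $LT(\rho)$ term, so no delicate null-structure cancellation is needed; the bookkeeping is handled instead by the $T$-order filtration $\mathscr{P}_{n,k}$. Your direct-commutation approach may be salvageable, but the closure you describe (``dangerous top-order terms cancel or reduce to tangential derivatives'') is asserted rather than shown, and in fact a first-order commutation does produce genuine $\Th^{k+1}$ terms that only the second-order (wave) structure absorbs.
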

	
	\begin{remark}
		Since the proof of the above result is local, we can also change the topological types of $C_0$  to suit other situations. For example, we may take $C_0$ to have the topology of hyperplanes. This can be used to model the gas dynamics near plane symmetry. 
	\end{remark}
	\begin{remark}\label{rem: pure goursat}
		In the theorem, we have already set the initial data on $\Sigma_0$. We can also consider a complete Goursat initial data for two intersection characteristic hypersurfaces $C_0$ and $\Cb_0$ depicted as follows.
		\begin{center}
			\includegraphics[width=3in]{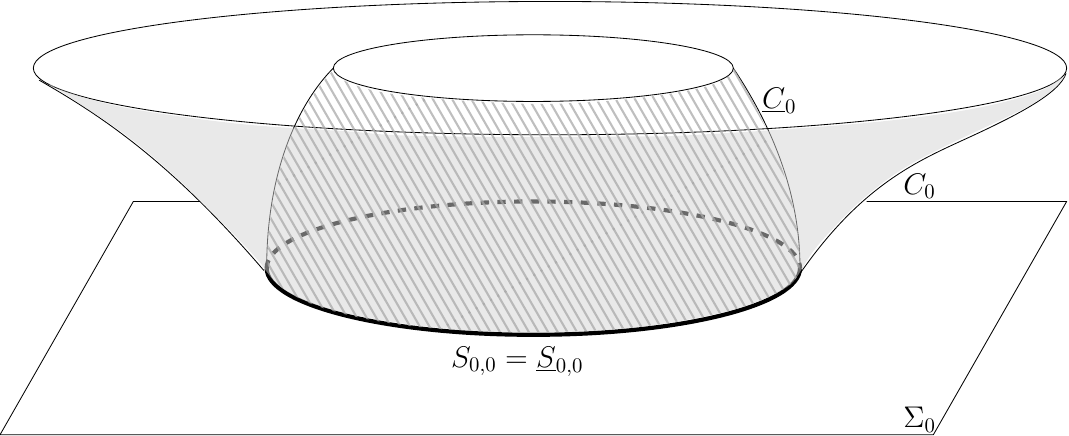}
		\end{center}
		We can also prescribe smooth initial data on $C_0$ and $\Cb_0$ so that $C_0$ and $\Cb_0$ become characteristic hypersurfaces simultaneously. We chose to prove the particular formulation in the above theorem rather than other versions because it presents the clearest and most straightforward expression for such problems while preserving the essential nature of the subject.
	\end{remark}
	\subsection{Motivations}\label{SS:Motivations}
	The study of hyperbolic PDEs on characteristic hypersurfaces has played a central role in various breakthrough results over the last decades (see \cite{C07,C09,C19,C-K93}). The general formulation of the characteristic initial value problem (CharIVP) is expected to be useful for studying solution dynamics. This is either because characteristic initial data offers greater freedom than Cauchy data in certain contexts, or because it arises naturally in specific problems.  
	
	As an example, CharIVP has been fundamental in general relativity for investigating trapped surface formation \cite{C09}.  In studying black hole formation mechanisms, the Cauchy approach requires solving an overdetermined elliptic system – the constraint equations – which presents significant technical difficulties. In contrast, the characteristic approach essentially reduces the initial data specification to solving a system of ordinary differential equations, offering precise control over the resulting initial values.
	
Another important example involves the construction of maximal classical developments in hyperbolic PDEs. Describing this maximal development – the largest possible spacetime region uniquely determined by initial data where classical solutions remain valid – represents a fundamental challenge (see \cite{Abbrescia-SpeckCauchyHorizon, Abbrescia-Speck22, C07, C19} for specific analyses and \cite{Abbrescia-Speck23} for a comprehensive survey). For the 3D compressible Euler equations, constructing maximal classical development requires understanding CharIVP. CharIVP is crucial for Cauchy horizon formation in shock-forming solutions \cite{Abbrescia-SpeckCauchyHorizon}, as the horizon is a degenerate acoustical null hypersurface. CharIVP also connects to the shock development problem (a characteristic initial boundary value problem) \cite{C19}. Given this understanding, shock reflection and interaction problems \cite{lisibach2021shock,lisibach2022shock,wang2023shock} similarly require CharIVP settings, specifically double-null foliation. There are very few results  regarding CharIVP for the compressible Euler equations. The lack of results has two causes: \textbf{1)} The geometric formulation of 3D compressible Euler with dynamic vorticity and entropy was only recently discovered \cite{Disconzi-Speck19,Luk-Speck20,Speck19}, revealing structures crucial for CharIVP. \textbf{2)} For multiple-speed systems, establishing a functional framework on double null foliations is challenging because null hypersurfaces are characteristic only for part of the system. Specifically, vorticity and entropy propagate transversally to null hypersurfaces, requiring different treatment than the characteristic flow components, while both parts remain coupled.	
	\subsection{A brief account of the related works}\label{SS:RelatedWorks}
	
	We first recall the characteristic initial value problem in the context of general relativity. We consider a given null hypersurface $C$ foliated by $2$-dimensional surfaces $\{S_t\}_{t\in [0,1]}$. We use $\slashed{g}$ and $\chi$ to denote the induced metric on $S_t$ and the second fundamental form of $S_t$ inside $C$ respectively. Let $\widehat{\chi}$ be the traceless part of $\chi$ with respect to $g$. Predicting the characteristic initial value on $C$ for the vacuum Einstein field equations is equivalent to prescribing the conformal class of $\slashed{g}$ on $\{S_t\}$. Moreover, if the geometry of $S_0$ is given, using the Einstein equations it is equivalent to giving $\widehat{\chi}$ on $C$. We refer to Christodoulou's monograph \cite{C07} for a detailed description.
	
	In the case of two intersecting null cones, just as in the last remark of the previous section, once the characteristic initial data are prescribed on two cones, the local well-posedness for the characteristic initial value problem was indeed obtained in 1990 by Rendall, see \cite{Rendall90}. The solution constructed in \cite{Rendall90} is local in the sense that it is in a neighborhood of the intersection of the two null cones.  The solution indeed can be extended to a neighborhood of the initial two null cones; see the work \cite{Luk12} of Luk.
	
	It is also of great interest to study the characteristic initial value problems for the compressible Euler equation; see Section \ref{SS:Motivations} for motivations. In comparison with the Einstein equations, the Euler equations are naturally posed on the Galilean spacetime with usual Cartesian coordinates $(t,x^1,x^2,x^3)$. If a hypersurface $C_0$ is given, we do not have freedom to parametrize in an arbitrary coordinates. For Einstein equations, since the spacetime is part of the solution,  we have extra freedom coming from the diffeomorphism group of the underlying manifold. Therefore, if we prescribe $(\rho, v, s)$ in an arbitrary way on $C_0$, the corresponding acoustical metric is then determined. Thus, this must be a non-trivial constraint to guarantee that $C_0$ is null. To our knowledge, the work \cite{CharivpLisibach} by Lisibach was the only one in this direction. He prescribes the characteristic initial value problem for barotropic flow in the case of spherical symmetry and solves Euler equations locally. In general case,  the only existing work is the preprint \cite{Speck-Yu} of Speck-Yu, under the assumption that the smooth initial data\footnote{The initial data in \cite{Speck-Yu} consists of fluid variables $(v^i,\rho,s)$ on abstract hypersurfaces $\mathring{C_0}\bigcup\mathring{\Cb_0}$ and an embedding $\mathcal{E}$ that maps $\mathring{C_0}\bigcup\mathring{\Cb_0}$ into Cartesian spacetime such that the image $C_0\bigcup\Cb_0$ is a pair of transversally intersecting null hypersurfaces. In particular, in \cite{Speck-Yu}, the two-spheres $S_{u,\ub}$ are not necessarily in $\Sigma_t$.} are given (so are their tangential (w.r.t. the characteristic hypersurfaces) derivatives) on two intersecting characteristic hypersurfaces, they solve the transversal derivatives of the fluid variables on initial characteristic hypersurfaces to form a complete charcteristic initial value problem and they show that there exists a unique solution in a neighborhood of the initial intersecting characteristic hypersurfaces; see Section \ref{SS:CharIVP} for further introductions.

	\subsection{Characteristic initial value problem for 3D compressible Euler equations}\label{SS:CharIVP}
	In this section, we introduce a direct application of Theorem \ref{TH:RoughMain}. In \cite{Speck-Yu}, assuming the fluid data on the initial characteristic hypersurfaces, the authors prove the following local well-posedness result for the characteristic initial value problem: 
	\begin{theorem}[Local well-posedness in a neighborhood of the data null hypersurfaces, \cite{Speck-Yu}]\label{TH:localwellposedness}
		Consider $C^{\infty}$ fluid data for the compressible Euler equations \eqref{eq: Euler in Euler coordinates}
		that are prescribed on the union 
		$C_0 \cup \Cb_0$,
		where $C_0$ and $\Cb_0$ are $C^{\infty}$ characteristic hypersurface portions 
		that intersect transverally in a set $S_{0,0}$ that is diffeomorophic to $\mathbb{S}^2$.
		Assume that the Euler constraint equation\footnote{Euler constraints equation is the Euler equations restricted on the characteristic hypersurfaces. In particular, in this article, the corresponding Euler constraint equation on $C_0$ is \eqref{eq:Euler constraints C}.} is satisfied on
		$C_0$ and $\Cb_0$, and the fluid data satisfy the corner compatibility condition\footnote{The fluid data must satisfy \eqref{eq: Euler in Euler coordinates} on $S_{0,0}$, which results in such compatability conditions for entropy and (angular) velocity.}.
		Let $N \geq 17$ be an integer, 
		and assume that 
		on $C_0([0,\ub_\star]) \cup \Cb_0([0,u_\star])$
		the initial data satisfy the following\footnote{In general, the constructed initial characteristic data suffers loss of derivatives in the transversal direction; see Section \ref{SS:T=1}-\ref{SS:T=k}.} norm bounds:
		\begin{subequations}
			\begin{align}
				\sup\limits_{\ub\in[0,\ub_{\star}]}\norm{\left[\Lb^j(L,\angp)^{k}\right](\rho,v^i,s)}_{L^2(S_{0,\ub})}\leqslant& \Lambda_0,&
				\textnormal{for } & 0\leqslant 2j+k\leqslant N,\\
				\sup\limits_{u\in[0,u_{\star}]}\norm{\left[L^j(\Lb,\angp)^{k}\right](\rho,v^i,s)}_{L^2(S_{u,0})}\leqslant& \Lambda_0,&
				\textnormal{for } & 0\leqslant 2j+k\leqslant N,\\
				\norm{\p^k(\rho,v^i,s)}_{L^\infty(C_{0}\bigcup\Cb_{0})}\leqslant& \Lambda_0,&
				\textnormal{for }& 0\leqslant k\leqslant\frac{N-1}{4},
			\end{align}
		\end{subequations}
		where $\Lambda_0$ is a constant\footnote{$\Lambda_0$ is allowed to be large.}, $L,\Lb$ are principal null vectorfields along $C$ and $\Cb$ respectively, and $\angp$ is the $S_{u,\ub}-$projection of $\p$. We assume the change of variables map\footnote{That is, the map from geometric coordinates to Cartesian coordinates.} $\Upsilon$ is a $C^\infty$-diffeomorphism from $[0,\ub_\star]\times \mathbb{S}^2 \bigcup [0,u_\star]\times \mathbb{S}^2$ onto $C_0\cup\Cb_0$ defined by:
		\begin{align} \label{DE:ChangeofVariableMap}
			\Upsilon(u,\ub,\theta^1,\theta^2)
			& := (t,x^1,x^2,x^3)		
		\end{align}
		
		Then, there exists a number $\delta > 0$ and a corresponding
		future-neighborhood $\mathcal{M}^{u_{\star},\ub_{\star}}_\delta$
		of $C_0([0,\ub_\star]) \cup \Cb_0([0,u_\star])$ 
		such that the following hold on $\mathcal{M}^{u_{\star},\ub_{\star}}_\delta$:
		\begin{itemize}
			\item There exists a unique $C^{\infty}$ solution $(\rho,v^1,v^2,v^3,s)$
			to the compressible Euler equations \eqref{eq: Euler in Euler coordinates}.
			\item There exist unique $C^{\infty}$ solutions $(u,\ub,\theta^1,\theta^2)$ to the initial value problems\footnote{The data of $(u,\ub,\theta^1,\theta^2)$ were given in the construction of the initial characteristic hypersurfaces $C_0\cup\Cb_0$.} subjected to the equations:
			\begin{align} \label{eq: eikonal equations}
				(g^{-1})^{\alpha\beta}\p_\alpha u\p_\beta u=&0,&
				(g^{-1})^{\alpha\beta}\p_\alpha \ub\p_\beta \ub=&0,\\
				\p_{\ub}\theta^1=&0,&
				\p_{\ub}\theta^2=&0.
			\end{align}
			Moreover, the level sets of $u$ and $\ub$ intersect transversally 
			in two-dimensional $g$-spacelike submanifolds 
			$S_{u',\ub'} := \lbrace u = u' \rbrace \cap \lbrace \ub = \ub' \rbrace$,
			all of which are diffeomorphic to $\mathbb{S}^2$.
			\item $\Upsilon$ is a $C^{\infty}$ diffeomorphism from the domain
			$[0,\delta] \times [0,\ub_{\star}] \times \mathbb{S}^2 \bigcup [0,u_{\star}] \times [0,\delta] \times \mathbb{S}^2$
			onto the image set $\mathcal{M}^{u_{\star},\ub_{\star}}_\delta$.
			\item The solution satisfies the energy estimates such that the Sobolev regularity of the fluid variables are propagated as:
			\begin{align}
				\sup\limits_{(u,\ub)\in[0,u_{\star}]\times[0,\ub_{\star}]}\norm{\left[(L,\Lb)^j\angp^{k}\right](\rho,v^i,s)}_{L^2(S_{u,\ub})}\lesssim \Lambda_0\exp\left(\Delta_0(\delta+u_{\star}+\ub_{\star})\right),
			\end{align}
			where $0\leq 2j+k\leq N-4$ and $\Delta_0$ is a large (bootstrap) constant.
		\end{itemize}
	\end{theorem}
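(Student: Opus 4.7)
The plan is to work entirely in the geometric framework of a double null foliation. First I would introduce local coordinates $(u,\ub,\theta^1,\theta^2)$ on a neighborhood of $C_0\cup \Cb_0$, where $u,\ub$ are solutions of the eikonal equations \eqref{eq: eikonal equations} normalized so that $\{u=0\}=\Cb_0$ and $\{\ub=0\}=C_0$, and the $\theta^A$ are angular coordinates propagated off $S_{0,0}$ along the null generators. Associated with this foliation one has the acoustical null frame $(L,\Lb,e_1,e_2)$ together with the standard geometric quantities—the induced metric $\gsphere$ on $S_{u,\ub}$, the null second fundamental forms $\chi,\chib$, the torsion $\eta,\etab$, and the conjugate coefficients $\omega,\underline{\omega}$. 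The transversal regularity $\Lb^j$ on $C_0$ (respectively $L^j$ on $\Cb_0$) assumed in the hypotheses is exactly what is needed to initialize these Ricci coefficients on the data hypersurfaces.

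Next I would invoke the geometric formulation of the 3D compressible Euler system due to \cite{Disconzi-Speck19,Luk-Speck20,Speck19}. In terms of suitable modified variables—essentially the specific vorticity $\Omega$ and the entropy gradient $S$—the Euler system \eqref{eq: Euler in Euler coordinates} splits into a quasilinear system of covariant wave equations $\Box_g \Phi = \mathcal{N}(\Phi,\p\Phi)$ for the ``wave variables'' (density, velocity, and scalar functions built from $\Omega,S$) with a favorable null structure, together with transport equations $\mathbf{T}\Omega = \mathcal{S}(\Phi)$ and $\mathbf{T}S = 0$ along the material vector field $\mathbf{T}=\p_t + v\cdot\nabla$, which is $g$-timelike and hence transverse to every $C_u$ and $\Cb_{\ub}$.

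The core of the proof is then a bootstrap energy argument tailored to characteristic surfaces. I would posit low-order $L^\infty$ bootstrap assumptions on the fluid variables and the Ricci coefficients on a candidate region $\mathcal{M}^{u_\star,\ub_\star}_\delta$ and close them via three coupled ingredients: (i) multiplier energy identities for the wave subsystem using the $g$-timelike multiplier $L+\Lb$, which yield control of the fluxes $\|\nablaslash\Phi\|_{L^2(C_u)}^2 + \|\nablaslash\Phi\|_{L^2(\Cb_{\ub})}^2$ in terms of initial fluxes and a Gr\"onwall-type spacetime error; (ii) integration of the transport equations along the integral curves of $\mathbf{T}$, which exit the data region after a short parameter time and therefore produce the factor $\exp(\Delta_0(\delta+u_\star+\ub_\star))$; and (iii) elliptic Hodge and transport estimates on the spheres $S_{u,\ub}$ to recover $\gsphere,\chi,\chib,\eta,\etab$ from the fluid variables. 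Commuting with the tangential vector fields $L,\Lb,\angp$ up to order $N-4$ and using Sobolev embedding on $S_{u,\ub}$ yields the stated quantitative estimate; the eikonal functions $u,\ub$ and the diffeomorphism $\Upsilon$ are then constructed in parallel via standard ODE and implicit-function arguments from the solved wave variables.

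The main obstacle I anticipate is the derivative mismatch between the wave subsystem and the transport subsystem. Because $\mathbf{T}$ is transverse to both null hypersurfaces, a naive attempt to estimate top-order derivatives of $\Omega$ and $S$ would demand transversal derivatives of the source $\mathcal{S}(\Phi)$ of an order not a priori controlled by the wave-energy estimates. The resolution requires the null structure of $\mathcal{S}$ discovered in \cite{Disconzi-Speck19,Luk-Speck20,Speck19}: the top-order transversal derivatives of $\Phi$ entering $\mathcal{S}$ can be traded, modulo controllable lower-order terms, for tangential derivatives via the equations themselves, so that the hierarchy $(L,\Lb)^j \angp^k \Phi$ and $(L,\Lb)^j \angp^k(\Omega,S)$ can be estimated in a single synchronized bootstrap. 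Once this algebraic matching at the top order is effected, the geometric and analytic consequences—smoothness of $(\rho,v^i,s)$, existence of the eikonal functions, and the diffeomorphism property of $\Upsilon$ onto $\mathcal{M}^{u_\star,\ub_\star}_\delta$—follow by \emph{a posteriori} higher regularity and standard continuation arguments.
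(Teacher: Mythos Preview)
The paper does not contain a proof of this theorem: it is stated in Section~\ref{SS:CharIVP} as a result quoted from the preprint \cite{Speck-Yu}, introduced by the sentence ``In \cite{Speck-Yu}, assuming the fluid data on the initial characteristic hypersurfaces, the authors prove the following local well-posedness result\ldots''. The present paper uses Theorem~\ref{TH:localwellposedness} only as a black box to be combined with Theorem~\ref{TH:RoughMain}; all of the paper's actual work concerns the \emph{construction of the characteristic data}, not the evolution problem.

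Consequently there is no ``paper's own proof'' to compare against. Your outline is a plausible high-level sketch of how a result like \cite{Speck-Yu} might be proved---double-null foliation, the geometric wave/transport splitting of \cite{Disconzi-Speck19,Luk-Speck20,Speck19}, multiplier energies on $C_u$ and $\Cb_{\ub}$, transport estimates along the timelike material vector field, and a bootstrap---but it is not something this paper attempts, and your proposal remains at the level of a strategy rather than a proof. If your goal is to supply what the present paper omits, you would need to consult \cite{Speck-Yu} directly; if your goal is to understand the present paper, no proof of Theorem~\ref{TH:localwellposedness} is required.
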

	Combining Theorem \ref{TH:RoughMain} and Theorem \ref{TH:localwellposedness}, given hypersurfaces $C_0,\Cb_0$ as in Section \ref{SS:Statement}, we are able to prescribe fluid data such that they are admissible to form a characteristic initial value problem. The solution to this problem (locally) exists and is unique, which suggests that modulo free components of the data ($\vs,s$ in this article; see Section \ref{sec:order 0 2}), the initial characteristic hypersurfaces completely determine the dynamics on the initial hypersurfaces and in the future of them (for at least a short time). In particular, the dynamics of spherical/plane symmetric barotropic compressible Euler flow is uniquely determined by the prescribed initial hypersurfaces.
	\subsection{The spherical symmetric case}\label{SS:intro1D}
	In \cite{CharivpLisibach}, the author considers the characteristic initial value problem in spherical symmetry for barotropic compressible Euler equations, where the characteristic system can be written as an ODE system as follows:\\
	
	\textbf{On $C_u:=\{u=\text{const}\}$:}
	\begin{subequations}
		\begin{align}
			\label{EQ:Riemann1}\frac{\diff\mathcal{R}_{+}}{\diff\ub}=&-\frac{c(\mathcal{R}_{+},\mathcal{R}_{-})}{r}(\mathcal{R}_{+}-\mathcal{R}_{-}),\\
			\frac{\diff r}{\diff\ub}=&\frac{1}{2}(\mathcal{R}_{+}-\mathcal{R}_{-})+c(\mathcal{R}_{+},\mathcal{R}_{-}).
		\end{align}
	\end{subequations}

	\textbf{On $\Cb_{\ub}:=\{\ub=\text{const}\}$:}
	\begin{subequations}
		\begin{align}
			\label{EQ:Riemann2}\frac{\diff\mathcal{R}_{-}}{\diff u}=&-\frac{c(\mathcal{R}_{+},\mathcal{R}_{-})}{r}(\mathcal{R}_{+}-\mathcal{R}_{-}),\\
			\frac{\diff r}{\diff\ub}=&\frac{1}{2}(\mathcal{R}_{+}-\mathcal{R}_{-})-c(\mathcal{R}_{+},\mathcal{R}_{-}),
		\end{align}
	\end{subequations}
	
	where $\mathcal{R}_{+},\mathcal{R}_{-}$ are a pair of Riemann invariants (introduced in \cite{Riemann1860}), $r$ is the radius, $u,\ub$ are the geometric coordinates and $c$ is the speed of sound. In the above system, one has the freedom to prescribe $\mathcal{R}_{-}$ in $C_0 (\{u=0\})$ and $\mathcal{R}_{+}$ on $\Cb_0 (\{\ub=0\})$. 
	
	The characteristic system above provides the following important insights regarding the construction of the initial data for the Euler equations:
	\begin{itemize}
		\item The initial hypersurfaces and the initial fluid data are coupled, so one has to construct the initial hypersurfaces and initial fluid data simultaneously. In other words, the characteristic initial hypersurfaces can/should be viewed as a part of the initial data.
		\item A crucial difference compared to the case of the standard Cauchy problem (in which the data are prescribed on a space-like hypersurface) is that in the characteristic case, \textbf{some components of the initial data are constrained}, that is, determined by the free components by virtue of equations obtained by restricting the PDE to the initial characteristic hypersurfaces.
	\end{itemize}

	Similar to the above model problem, for the 3D compressible Euler equations without symmetry, we have limited freedom\footnote{Similarly, in the context of general relativity, there exists the so-called \emph{free data}, i.e., the 2-tensor $\chih$.} in prescribing part of the initial fluid data. The other components are constrained by the compressible Euler equations and should be solved by PDEs restricted on the initial hypersurfaces. In addition, since characteristic hypersurfaces and fluid variables are deeply coupled, the initial hypersurfaces should be part of the data construction. Moreover, once the data (itself) is prescribed on $C_0$, we also show how to integrate the Euler equations in a specific order to derive all jets of $(\rho, v, s)$ on $C_0$.
	
    \subsection{Acknowledgment}
    SY acknowledges the support of MOE Tier 2 grant A-8000977-00-00.
	
	\section{Acoustical geometry and the precise statement of the result}\label{section_acoustical geometry}
	
	Although in reality solutions to \eqref{eq: Euler in Euler coordinates} may only exist on a portion of the spacetime\footnote{One cannot hope to avoid singularities globally in time: it is known that, even in the irrotational and isentropic case, the compression of sound waves can cause shocks to develop from regular initial data in finite time.}, since all the calculations in this paper is local, we assume a solution $(\rho,v,s)$ exists for all $t\in [0,T]$ and $x\in \mathbb{R}^3$ where $T\in (0,\infty]$. Throughout the paper, we use $(x^0,x^1,x^2,x^3)=(t,x^1,x^2,x^3)$ to denote the Cartesian coordinates on the Galilean spacetime $\mathcal{D}=[0,T]\times \mathbb{R}^3$. We recall that $\Sigma_{t}$ is  the spatial hypersurface $\{(x^0,x^1,x^2,x^3) |x^0=t\}$ and on $[0,T]\times \mathbb{R}^3$. On $\mathcal{D}$, we  have the acoustical metric
	\[ g = - c^2 dt^2 +  \sum_{i=1}^3 (dx^i - v^idt)^2.\]
	This is a four dimensional Lorentzian metric. Its null hypersurfaces represent the characteristic hypersurfaces of the solution $(\rho,v,s)$. In the Cartesian coordinates, the matrix representation of $g$ is given by
	\[(g_{\mu\nu})=\left( {\begin{array}{cccc}
			-c^2+|v|^2 & -v^1 &-v^2 &-v^3 \\
			-v^1 & 1 & 0 & 0\\
			-v^2 & 0 & 1&0\\
			-v^3 & 0 & 0&1
	\end{array} } \right).\]
	The inverse metric can be computed as 
	\[g^{-1} = -\frac{1}{c^2} (\partial_t +\sum_{i=1}^3  v^i \partial_i) \otimes (\partial_t +\sum_{i=1}^3  v^i \partial_i)  +\sum_{i=1}^3 \partial_i \otimes \partial_i,\]
	with matrix form as follows
	\begin{equation}\label{acoustic in inverse matrix}
		(g^{\mu\nu})=\left( {\begin{array}{cccc}
				-\frac{1}{c^2} & -\frac{v^1}{c^2} &-\frac{v^2}{c^2} &-\frac{v^3}{c^2} \\
				-\frac{v^1}{c^2} & 1-\frac{(v^1)^2}{c^2} & -\frac{v^1 v^2}{c^2} & -\frac{v^1 v^3}{c^2}\\
				-\frac{v^2}{c^2} & -\frac{v^2 v^1}{c^2} & 1-\frac{(v^2)^2}{c^2} & -\frac{v^2 v^3}{c^2}\\
				-\frac{v^3}{c^2} & -\frac{v^3 v^1}{c^2} & -\frac{v^3 v^2}{c^2} & 1-\frac{(v^3)^2 }{c^2}
		\end{array} } \right).
	\end{equation}
	We also collect the formulas of the corresponding Christoffel symbols as follows:
	\begin{equation}\label{eq:Christoffel symbols}
		\begin{cases}
			&\Gamma_{00}^0=\frac{1}{2c^2}\left[\partial_0(c^2)+v(-c^2+|v|^2)\right],\\ 
			&\Gamma_{00}^i=\frac{v^i}{2c^2}\left[\partial_0(c^2)+v(-c^2+|v|^2)\right]-\partial_0 v^i-\frac{1}{2}\partial_i(-c^2+|v|^2),\\
			&\Gamma_{i0}^0 =-\frac{1}{2c^2}\partial_i(-c^2+|v|^2)+\frac{v^k}{2c^2}(\partial_iv^k-\partial_kv^i),\\ &\Gamma_{i0}^j =-\frac{v^j}{2c^2}\partial_i(-c^2+|v|^2)+\frac{v^j v^k}{2c^2}(\partial_iv^k-\partial_kv^i)-\frac{1}{2}(\partial_iv^j-\partial_jv^i),\\
			&\Gamma_{ij}^0=\frac{1}{2c^2}(\partial_iv^j+\partial_jv^i),\ \ \Gamma_{ij}^k=\frac{v^k}{2c^2}(\partial_iv^j+\partial_jv^i).
		\end{cases}
	\end{equation}

	\subsection{The acoustical coordinate system}
	
	We refer to \cite{C07} and \cite{C-Miao14} for details of the construction of the acoustical coordinates. A acoustical (local) coordinate system on $\mathcal{D}=[0,T]\times \mathbb{R}^3$ consists of four smooth functions $t$, $u$ and $\vartheta^A$ with $A=1,2$. The temporal function $t$ is defined as  $t=x^0$.

	We assume that there exists a smooth function $u_0\in C^\infty(\Sigma_0)$ so that $S_{0,0}=C_0\cap \Sigma_0=u_0^{-1}(0)$ and $\nabla u_0\big|_{S_{0,0}}\neq 0$. We also assume that $u_0>0$ corresponds to the interior of $S_{0,0}$ inside $\Sigma_0$. This is the region where $(\rho, v,s)$ is already given. Moreover, the level sets of $u_0$ provide a smooth foliation of $\Sigma_0$ around $S_{0,0}$.
	
	To construct the acoustical function $u$, first of all, we require the acoustical function $u$ is equal to $u_0$ on $\Sigma_{0}$. We then define $C_u$ to be the null hypersurfaces consisting of null geodesics emanating from each level set of $u$ on $\Sigma_0$. Then, we require $C_u$ to be the level sets of $u$ and this defines $u$ on $\mathcal{D}$. We also define $S_{t,u} = \Sigma_t \cap C_u$. Therefore, we have $\Sigma_t =\displaystyle \bigcup_{u}S_{t,u}$. 
	\begin{center}
		\includegraphics[width=4in]{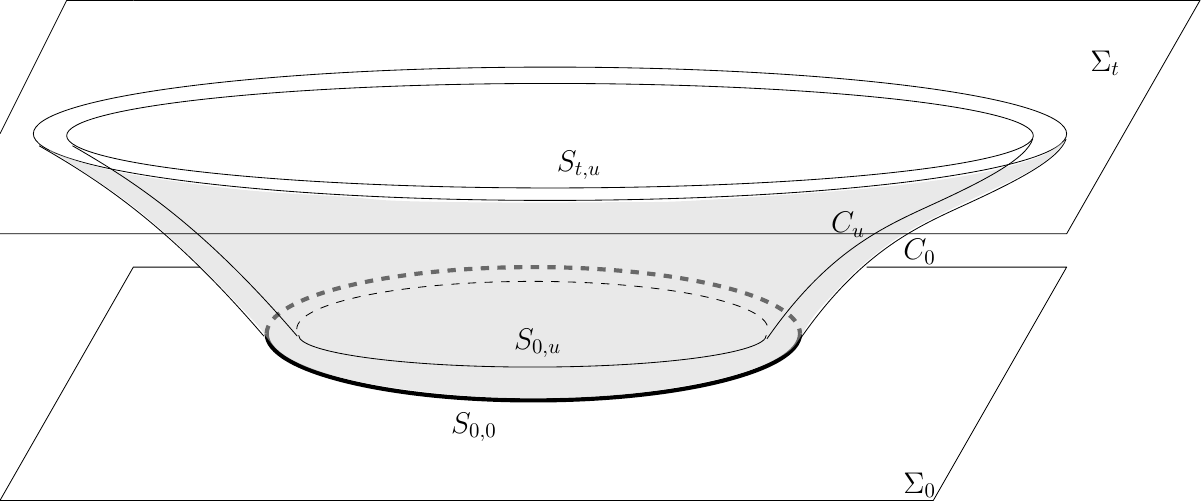}
	\end{center}
	We choose {\color{black}the future-pointed vector field $L$} to be the generators of the null geodesics on $C_u$ in such a way that $L(t) = 1$. The inverse density function $\mu$ measures the temporal density of the foliations $\{C_u\}_{u\geqslant 0}$ and it is defined as
	\[\mu^{-1} = -g(D t,D u),\]
	where $D$ is the Levi-Civita connection with respect to the acoustical metric $g$.
	
	We then consider the normal vector field $T$ of the foliation $\displaystyle \bigcup_{u}S_{t,u}$ on $\Sigma_t$. It is uniquely defined by the following three conditions:
	\[(1) \ T \  \text{is tangent to} \ \Sigma_t; \ \ (2) \ {\color{black}T \ \text{is $g$-perpendicular to}} \ S_{t,u}; \ \ (3) \ Tu = 1.\]
	

	To define the angular coordinate functions $\vartheta^1$ and $\vartheta^2$,  for all $u$, we can first fix a local coordinate system $(\vartheta^A)_{A=1,2}$ on  $S_{0,0}$. The next step is to define $\vartheta$ on $\Sigma_0$ by extending $(\vartheta^A)_{A=1,2}$ through the following equation on $\Sigma_0$:
	\[
	T( \vartheta^A)=0,\ \ \ \vartheta^A \big|_{S_{0,0}}= \vartheta^A .
	\]
	Since the $1$-parameter diffeomorphism group generated by $L$ maps $S_{0,u}$ to $S_{t,u}$, we then take $(\vartheta^A)_{A=1,2}$ as local coordinate system on $S_{t,u}$ by the above $1$-parameter group, i.e.,  we use the following ODE system to extend $(\vartheta^A)_{A=1,2}$:
	\[L( \vartheta^A)=0, \ \ A=1,2.\]
	Therefore, we have constructed an acoustical coordinate system $(t,u,\vartheta^1,\vartheta^2)$ on $\mathcal{D}$. 
	
	We also denote $X_A= \frac{\partial}{\partial \vartheta^A}$. In the acoustical coordinates $(t,u,\vartheta)$, we have
	\begin{equation}\label{eq: L T in terms of coordinates}
		L = \frac{\partial}{\partial t}, \quad T = \frac{\partial}{\partial u} -\sum_{A=1,2} \Xi^A \frac{\partial}{\partial \vartheta^A}, \quad \frac{\partial}{\partial \vartheta^A}=X_A,
	\end{equation}
	where $\Xi^A (A=1,2)$ are smooth functions. The acoustical metric then can be expressed as
	\[
	g=-\mu (du\otimes dt+dt\otimes du)+\kappa^2 du\otimes du +\slashed{g}_{AB}(\Xi^A du+d\vartheta^A)\otimes (\Xi^B du+ d\vartheta^B),
	\]
	where $\slashed{g}_{AB}=g(X_A,X_B)$ and the repeated capital latin indices $A$ and $B$ are understood as summations over $\{1,2\}$.
	
	Let $B=\partial_t+\sum_{i=1}^3 v^i\partial_i$ be the traditional material derivative vector field. We then have $B(t)=1$ and \[ g(B,B)=-c^2, \ \ g(B, \partial_i)=0, \ i=1,2,3.\] 
	This shows that $c^{-1}B$ is the future-pointed unit normal to $\Sigma_t$.
	
	Let $\kappa^2=g(T,T)$, we can compute that $\mu = c \kappa$. We also define the unit vector $\widehat{T} = \kappa^{-1}T$. The null vector field $L$ can be represented as 
	\[L = \frac{\partial}{\partial t} +v-c\widehat{T}=\partial_t-\sum_{i=1}^3(c\Th^i-v^i)\frac{\partial}{\partial x^i},\]
	where $\widehat{T}=\sum_{i=1}^3 \Th^i\frac{\partial}{\partial x^i}$ is the expression of $\Th$ on $\Sigma_t$ in the Cartesian coordinates.

	\subsection{The acoustical structure equations}
	
	We first introduce three different fundamental forms. For the embedded spacelike hypersurface $\Sigma_t\hookrightarrow \mathcal{D}$, since $c^{-1}B$ is the unit normal, we define
	\[k(X,Y)=g\left(D_Y \big(c^{-1}B\big),X\right) \ \ \Leftrightarrow \ \ ck(X,Y)=g\left(D_Y B ,X\right),\]
	where $X,Y$ are tangent vectors to $\Sigma_t$.
	In the Cartesian coordinates, we have
	\[ ck_{ij}=\frac{1}{2}(\partial_iv^j+\partial_jv^i).\]
	For the null hypersurfaces $C_u$, we define
	\[\chi(X,Y):=g(\nabla_X L,Y),\]
	where $X,Y$ are tangent vectors to $S_{t,u}$.
	Similarly, for the embedding $S_{t,u}\hookrightarrow \Sigma_t$, using its unit normal $\Th$, we define
	\[\theta(X,Y)=g\left(\nabla_Y \Th, X\right) \ \ \Leftrightarrow \ \ \kappa \theta(X,Y)=g\left(\nabla_Y T ,X\right),\]
	where $X,Y$ are tangent vectors to $S_{t,u}$. The $\nabla$ is the Levi-Civita connection to the standard Euclidean metric  on $\Sigma_t$. We remark that $\nabla=D\big|_{\Sigma_t}$ since $g\big|_{\Sigma_t}$is the standard Euclidean metric. By $L=B-c\Th=c(c^{-1}B-\Th)$, for all $X,Y$ tangent vectors to $S_{t,u}$, we have
	\[\chi(X,Y)=c\big(k(X,Y)-\theta(X,Y)\big).\]
	
	We also define torsion forms
	\[\zeta(X):=g(D_X L, T), \ \ \eta(X):=-g(D_X T,L),\]
	where  $X$ is a tangent vector field to $S_{t,u}$. We can check that
	\[\eta = \zeta+\slashed{d}(\mu),\]
	where $\slashed{d}$ is the differential on $S_{t,u}$. The torsion $\zeta$ can also be expressed as
	\begin{equation}\label{eq: zeta in X and T} 
		\zeta(X)=\kappa\big(ck(X, \Th)-X(c)\big),
	\end{equation}
	where  $X$ is a tangent vector field to $S_{t,u}$.

	By the above notations, we can express the Levi-Civita connection in the frame $(L,T,X^1,X^2)$ as follows:
	\begin{equation}\label{eq:structure equations}
		\begin{cases}
			D_L L&= \mu^{-1}L(\mu)L,\\
			D_TL&= \eta^A X_A-c^{-1}L(\kappa)L,\\
			D_LT&= -\zeta^A X_A-c^{-1}L(\kappa)L,\\
			D_{X_A} L &= -\mu^{-1}\zeta_A L+\chi_A{}^{B}X_B,\\
			D_{X_A}T&=c^{-1}\kappa \varepsilon_A L+\mu^{-1}\eta_A T+\kappa\theta_{A}{}^{B}X_B,\\
			D_{T}T&=c^{-2}\kappa(Tc+L\kappa)L+\big[c^{-1}(Tc+L\kappa)+\kappa^{-1}T\kappa\big]T-\kappa\slashed{g}^{AB}X_B(\kappa)X_A,\\
			D_{X_A}X_B&=\slashed{D}_{X_A}X_B+c^{-1}\slashed{k}_{AB}L+\mu^{-1}\chi_{AB}T.
		\end{cases}
	\end{equation}
	We refer to \cite[Section 3]{C-Miao14} for detailed computations.
	
	From \eqref{eq:structure equations}, it is straightforward to derive the following commutation formulas:
	\begin{equation}\label{eq:commutator}
		[L,T]=-\left(\zeta^A+\eta^A\right)X_A, \ [L,X_A]=0, \ 
	\end{equation}
	Since $T=\kappa \Th$, we also have
	\begin{equation}\label{eq:commutator 2}
		[L,\Th]=-\frac{1}{\kappa}\left(\zeta^A+\eta^A\right)X_A-\frac{1}{\kappa}L(\kappa)\Th.
	\end{equation}
	
	
	The most important structure equation in the paper is the transport equation for the inverse density $\kappa$ along $L$: 
	\begin{equation}\label{eq: structure equation L kappa}
		L\kappa=-Tc+\sum_{j=1}^3\Th^j T(v^j). 
	\end{equation}
	where $\Th^i$ are functions. We also recall that $\widehat{T}=\sum_{i=1}^3 \Th^i\frac{\partial}{\partial x^i}$ is the expression of $\Th$ on $\Sigma_t$ in the Cartesian coordinates. For $\Th^i$'s, its derivatives can be expressed as
	\begin{equation}\label{eq: L Th i}
		L(\Th^i)\partial_i=\slashed{g}^{AB}\left[-\Th^jX_A(v^j)+X_A(c)\right]X_B,
	\end{equation}
	\begin{equation}\label{eq: T Th i}
		T(\Th^i)\partial_i=-\slashed{g}^{AB}X_B (\kappa)X_A,
	\end{equation}
	and
	\begin{equation}\label{eq: XA Th i}
		X_A(\Th^i)\partial_i=\slashed{g}^{CB}\theta_{BA} X_C.
	\end{equation}

	By the formula \eqref{eq:structure equations} of the covariant derivatives and the fact that $\chi =c (\slashed{k} -\theta)$, we can decompose the wave operator $\Box_g$ in the null frame $\{E_1,E_2,E_3,E_4\}=\{X_1,X_2,T,L\}$. Indeed, for a given smooth function $f$, we can compute that
	\begin{equation}\label{eq: wave in L T}
		\begin{split}
			\Box_g f&=\sum_{\mu,\nu=1}^4 g^{E_{\mu}E_{\nu}}\left(E_\mu\big(E_\nu(f)\big)-\big(D_{E_\mu}{E_\nu}\big)(f)\right)\\
			&=-\frac{2}{\mu} LT(f)+\slashed{\triangle}_{\slashed{g}}(f)-c^{-2}L^2(f)\\
			&\ \ \ +c^{-3}L(c)L(f)-(\mu c)^{-1}L(\kappa)L(f)-c^{-1}\tr\slashed{k}\cdot L(f)-(\tr\slashed{k}-\tr\theta)\Th(f)-\frac{2}{\mu}\zeta^A X_A(f).
		\end{split}
	\end{equation}
	
	\subsection{Wave equation for the density function}
	
	We derive a wave equations with respect to the acoustical metric for the density function $\rho$ in this section. We first rewrite the Euler equations in a geometric form. In terms of $B=\partial_t+v\cdot \nabla$, we can write the Euler equations as follows:
	\begin{equation}\label{eq:Euler equations rho}
		\begin{cases}
			&B(\rho)=-\rho\cdot {\rm div}(v),\\
			&B(v^i)=-\rho^{-1} \nabla^i p,\\
			&B(s)=0.
		\end{cases}
	\end{equation}
	For an arbitrary vector field $X$ on $\mathcal{D}$,  we can write it uniquely in the following form
	\[X=-\frac{1}{c^2}g(X,B)B+X^{\parallel},\]
	where $X^\parallel$ is tangential to $\Sigma_t$.
	For arbitrary smooth function $\phi$, in the orthonormal frame $\{e_0=B,e_1=\partial_1,e_2=\partial_2,e_3=\partial_3\}$,  we compute
	\begin{align*}
		\Box_g \phi &=g^{\mu\nu}\big[e_\mu e_\nu (\phi)-(\nabla_{e_\mu}e_\nu)(\phi)\big]=g^{BB}\big[B^2(\phi)-(\nabla_{B}B)(\phi)\big]+g^{ij}\big[e_ie_j (\phi)-(\nabla_{e_i}e_j)(\phi)\big]\\
		&=g^{BB}\big[B^2(\phi)-(\nabla_{B}B)(\phi)\big]+g^{ij}\big[e_i e_j (\phi)-(\nabla_{e_i}e_j)^\parallel(\phi)+\frac{1}{c^2}g(\nabla_{e_i}e_j,B)B(\phi)\big]\\
		&=-\frac{1}{c^2}B^2(\phi)+\frac{1}{c^2}(\nabla_{B}B)(\phi)+\triangle \phi-\frac{g^{ij}k_{ij}}{c}B(\phi)\\
		&=-\frac{1}{c^2}B^2(\phi)+\frac{1}{c^2}(\nabla_{B}B)(\phi)+\triangle \phi-\frac{{\rm div}(v)}{c^2}B(\phi).\end{align*}
	Since $\nabla_B B$ can be expressed as
	\begin{align*}
		\nabla_B B=\frac{1}{c}B(c)B+c\sum_{i=1}^3\partial_i(c)\partial_i,
	\end{align*}
	we obtain that
	\begin{equation}\label{eq:formula for box g phi}
		\Box_g \phi =-\frac{1}{c^2}B^2(\phi)+\frac{1}{c^3}B(c)B(\phi)+\frac{1}{c}\partial_i(c)\partial_i(\phi)+\triangle \phi-\frac{{\rm div}(v)}{c^2}B(\phi).
	\end{equation}

	By \eqref{eq:Euler equations rho}, we can derive the wave equation for $\rho$ as follows:
	\begin{align*}
		\Box_g(\rho)&=-\frac{1}{c^2}B^2(\rho)+\frac{1}{c^3}B(c)B(\rho)+\frac{1}{c}\nabla (c)\cdot \nabla(\rho)+\triangle \rho-\frac{{\rm div}(v)}{c^2}B(\rho)\\
		&=\frac{1}{c^2}B\left(\rho {\rm div}(v)\right)+\frac{1}{c^3}B(c)B(\rho)+\frac{1}{c}\nabla (c)\cdot \nabla(\rho)+\triangle \rho-\frac{{\rm div}(v)}{c^2}B(\rho)\\
		&=\frac{\rho}{c^2}   B \big({\rm div}(v)\big)+\frac{1}{c^3}B(c)B(\rho)+\frac{1}{c}\nabla (c)\cdot \nabla(\rho)+\triangle \rho\\
		&=\frac{\rho}{c^2} \left(\frac{\nabla \rho \cdot \nabla p}{\rho^2}-\frac{\triangle p}{\rho}-\nabla_i v^j \nabla_j v^i\right)+\frac{1}{c^3}B(c)B(\rho)+\frac{1}{c}\nabla (c)\cdot \nabla(\rho)+\triangle \rho.
	\end{align*}
	We notice that 
	\[\nabla^i p=c^2\nabla^i \rho+\frac{\partial p}{\partial s} \nabla^i s,\] 
	and
	\[\triangle p=c^2\triangle \rho+\frac{\partial p}{\partial s} \triangle s+\frac{\partial  c^2}{\partial \rho}|\nabla \rho|^2 +2\frac{\partial  c^2}{\partial s} \nabla \rho\cdot \nabla s+\frac{\partial^2 p}{\partial s^2} |\nabla s|^2.\]
	Therefore,
	\begin{align*}
		\Box_g(\rho)=&\frac{1}{c^2} \Big\{\frac{ c^2}{\rho}|\nabla \rho|^2+\frac{1}{\rho}\frac{\partial p}{\partial s} \nabla \rho \cdot \nabla s- \big[c^2\triangle \rho+\frac{\partial p}{\partial s} \triangle s+\frac{\partial  c^2}{\partial \rho}|\nabla \rho|^2 +2\frac{\partial  c^2}{\partial s} \nabla \rho\cdot \nabla s+\frac{\partial^2 p}{\partial s^2} |\nabla s|^2\big] \\
		&-\rho \nabla_i v^j \nabla_j v^i\Big\}  +\frac{1}{c^3}B(c)B(\rho)+\frac{1}{c}\nabla (c)\cdot \nabla(\rho)+\triangle \rho\\
		=&\frac{1}{c^2} \Big\{\frac{ c^2}{\rho}|\nabla \rho|^2+\frac{1}{\rho}\frac{\partial p}{\partial s} \nabla \rho \cdot \nabla s- \big[\frac{\partial p}{\partial s} \triangle s+2c\frac{\partial  c}{\partial \rho}|\nabla \rho|^2 +4c\frac{\partial  c }{\partial s} \nabla \rho\cdot \nabla s+\frac{\partial^2 p}{\partial s^2} |\nabla s|^2\big] \\
		&-\rho \nabla_i v^j \nabla_j v^i\Big\}
		+\frac{1}{c^3}B(c)B(\rho)+\frac{1}{c}\frac{\partial c}{\partial \rho}|\nabla(\rho)|^2 +\frac{1}{c}\frac{\partial c}{\partial s}\nabla (\rho)\cdot \nabla(s) \\
		=&\frac{1}{c^2} \Big\{\frac{ c^2}{\rho}|\nabla \rho|^2+\frac{1}{\rho}\frac{\partial p}{\partial s} \nabla \rho \cdot \nabla s- \big[\frac{\partial p}{\partial s} \triangle s+\frac{\partial  c^2 }{\partial s} \nabla \rho\cdot \nabla s+\frac{\partial^2 p}{\partial s^2} |\nabla s|^2\big] -\rho \nabla_i v^j \nabla_j v^i\Big\}\\
		&+\frac{1}{c^3}B(c)B(\rho)-\underbrace{\left(\frac{1}{c}\frac{\partial c}{\partial \rho}|\nabla(\rho)|^2 +\frac{1}{c}\frac{\partial c}{\partial s}\nabla (\rho)\cdot \nabla(s) \right)}_{\frac{1}{c}\nabla (c)\cdot \nabla(\rho)}\\
		=&\frac{1}{c^2} \left(\frac{ c^2}{\rho}|\nabla \rho|^2 -\rho \nabla_i v^j \nabla_j v^i\right)+\frac{1}{c^2} \Big\{ \left(\frac{1}{\rho}\frac{\partial p}{\partial s} -\frac{\partial^2 p}{\partial s \partial \rho}\right)\nabla \rho \cdot \nabla s- \big[\frac{\partial p}{\partial s} \triangle s+\frac{\partial^2 p}{\partial s^2} |\nabla s|^2\big] \Big\}\\
		&-\frac{g(Dc,D\rho)}{c}.
	\end{align*}
	Hence,
	\begin{equation*}
		\Box_g(\rho)= \frac{ 1}{\rho}|\nabla \rho|^2 -\frac{\rho}{c^2} \nabla_i v^j \nabla_j v^i -\frac{g(Dc,D\rho)}{c}+\frac{1}{c^2} \left\{ \left(\frac{1}{\rho}\frac{\partial p}{\partial s} -\frac{\partial^2 p}{\partial s \partial \rho}\right)\nabla \rho \cdot \nabla s- \big[\frac{\partial p}{\partial s} \triangle s+\frac{\partial^2 p}{\partial s^2} |\nabla s|^2\big] \right\}.
	\end{equation*}
	In view of
	\begin{align*}
		|\nabla \rho|^2=g(D\rho,D\rho)+\frac{B(\rho)^2}{c^2}=g(D\rho,D\rho)+\frac{\rho^2}{c^2}{\rm div}(v)^2,
	\end{align*}
	we conclude that 
	\begin{equation}\label{eq: box rho}
		\begin{split}
			\Box_g(\rho)&= \frac{\rho}{c^2} \big({\rm div}(v)^2-\nabla_i v^j \nabla_j v^i\big) +\frac{g(D\rho,D\rho)}{\rho}-\frac{g(Dc,D\rho)}{c}\\
			& \ \ +\frac{1}{c^2} \left\{ \left(\frac{1}{\rho}\frac{\partial p}{\partial s} -\frac{\partial^2 p}{\partial s \partial \rho}\right)\nabla \rho \cdot \nabla s- \big[\frac{\partial p}{\partial s} \triangle s+\frac{\partial^2 p}{\partial s^2} |\nabla s|^2\big] \right\}
		\end{split}
	\end{equation}

	\subsection{Transport equations for Riemann invariants along characteristic hypersurfaces}

	On each $\Sigma_t$, we consider the orthogonal decomposition of $v$ as 
	\[v=v^{\Th}\cdot \Th+\vs,\]
	where $\vs$ is tangent to $S_{t,u}$. In view of the notations firstly introduced in Riemann's original paper \cite{Riemann1860}, we define the \emph{Riemann invariants} with respect to the null hypersurface $C_u$ as follows:
	\begin{equation}\label{def: Riemann invariants}
		\begin{cases}
			\wb=\frac{1}{2}\left(\int \frac{c(\rho,s)}{\rho} d\rho- v^{\Th}\right), \\ 
			w=\frac{1}{2}\left(\int \frac{c(\rho,s)}{\rho} d\rho+v^{\Th}\right),\\
			\ws=\slashed{v}.
		\end{cases}
	\end{equation}
	For the sake of simplicity, we introduce two auxiliary functions
	\[\Phi_0(\rho,s)=\int \frac{c(\rho,s)}{\rho} d\rho, \ \ \Psi_0(\rho,s)=\int \frac{1}{\rho}\frac{\partial c}{\partial s} d\rho-\frac{1}{c\rho}\frac{\partial p}{\partial s}.\]
	It is straightforward to check that the Euler equations \eqref{eq:Euler equations rho} can be written as
	\begin{equation}\label{eq:Euler equations prepare to derive Riemann} 
		\begin{cases}
			B(\Phi_0) &= -c\cdot {\rm div} (v),
			\\
			B(v^i) &= -c\cdot \nabla_i \Phi_0+c \cdot \Psi_0 \cdot \nabla_i s,\\
			B(s)&=0.
		\end{cases}
	\end{equation} 
	The purpose of the current section is to derive transport equations for $w$ and $\wb$ along $C_u$. We first transform \eqref{eq:Euler equations prepare to derive Riemann} in a covariant form so that it suits the tensorial computations.
	
	In view of the definition $\theta=g(D_X\Th,Y)$, we have
	\begin{equation}\label{eq: formula for div v}
		{\rm div}(v)=\Th(v_\Th)+\slashed{\rm div}(\slashed{v})-g\left( \slashed{v}, \nabla_\Th \Th\right)+v_\Th \cdot {\rm tr} _{\slashed{g}} \theta.
	\end{equation}
	In view of of the definition $k(X,Y)=g(\nabla_X (c^{-1}B),Y)$, we have
	\begin{align*}
		(\partial_t + v \cdot \nabla)v 
		&=-D_v \partial_0+ D_{B} v-c^{-1}k(v,v)B.
	\end{align*}
	By \eqref{eq:Christoffel symbols}, we compute
	\begin{align*}
		D_v \partial_0&=v^i\big(\Gamma_{i0}^0\partial_0+\Gamma_{i0}^j\partial_j\big)
		=-\frac{1}{2c^2}v(-c^2+|v|^2)B-\frac{1}{2}v^i(\partial_iv^j-\partial_j v^i)\partial_j\\
		&=-\frac{1}{2c^2}v(-c^2+|v|^2)B-\frac{1}{2}\nabla_v v+\frac{1}{4}\nabla(|v|^2)
	\end{align*}
	In view of $ck_{ij}=\frac{1}{2}(\partial_iv^j+\partial_jv^i)$, we have
	\begin{align*}
		\frac{1}{2c^2}v(-c^2+|v|^2)B -c^{-1}k(v,v)B=-\frac{1}{c}v(c)B.
	\end{align*}
	Therefore, 
	\begin{align*}
		(\partial_t + v \cdot \nabla)v&= D_{B} v-c^{-1}v(c)B+\frac{1}{2}v^i(\partial_iv^j-\partial_j v^i)\partial_j\\
		&= D_{B} v-c^{-1}v(c)B+\frac{1}{2}\nabla_v v-\frac{1}{4}\nabla(|v|^2).
	\end{align*}
	
	By this equation together with \eqref{eq: formula for div v},  the equation \eqref{eq:Euler equations prepare to derive Riemann} then can be rewritten as
	\[
	\begin{cases}
		&B (\Phi_0) =-c \left[\Th(v_\Th)+\slashed{\rm div}(\slashed{v})-g\left( \slashed{v}, \nabla_\Th \Th\right)+v_\Th \cdot  {\rm tr} _{\slashed{g}} \theta\right],\\
		&D_{B} v- c^{-1}v(c) \cdot B+\frac{1}{2}\nabla_v v-\frac{1}{4}\nabla(|v|^2)=-c \left[\Th \left(\Phi_0\right) \Th +g^{AB}X_B \left(\Phi_0\right) X_A\right] \\
		&\qquad\qquad\qquad\qquad\qquad\qquad\qquad\qquad\qquad+c \Psi_0 \left[\Th \left(s\right) \Th +g^{AB}X_B \left(s\right) X_A\right] .
	\end{cases}
	\]
	By $L=B-c\Th$ and $v=v^{\Th}\Th+\slashed{v}$, we obtain that 
	\begin{equation}\label{eq:Euler transformed 1}
		\begin{cases}
			&L\left(\Phi_0\right) = -c \left[\Th\left(\Phi_0+v_\Th\right)+\slashed{\rm div}(\slashed{v})-g\left( \slashed{v}, \nabla_\Th \Th\right)+v_\Th \cdot {\rm tr} _{\slashed{g}} \theta\right],
			\\
			& D_{L} v+cD_{\Th} v-  c^{-1}v(c) \cdot L -v(c) \cdot\Th+\frac{1}{2}\nabla_v v-\frac{1}{4}\nabla(|v|^2)\\
			&\qquad \quad =-c \left[\Th \left(\Phi_0\right) \Th +g^{AB}X_B \left(\Phi_0\right) X_A\right] +c \Psi_0 \left[\Th \left(s\right) \Th +g^{AB}X_B \left(s\right) X_A\right]  .
		\end{cases}
	\end{equation}
	To proceed, we then further decompose $D_{L} v$ and $D_{\Th} v$ using the frame $(L,T,X^1,X^2)$. Since $D_LT= -\zeta^A X_A-c^{-1}L(\kappa)L$, we have
	\begin{align*}
		D_{L} v
		&=L(v^{\Th}) \cdot \Th+ D_{L}\slashed{v} -c^{-1}\kappa^{-1}L(\kappa)\cdot v^{\Th} \cdot L-\kappa^{-1}L(\kappa)\cdot v^{\Th} \cdot \Th-\kappa^{-1}\zeta^A \cdot v^{\Th}  \cdot X _A  .
	\end{align*}
	In view of the following formula:
	\[D_{T}T=c^{-2}\kappa(Tc+L\kappa)L+\big[c^{-1}(Tc+L\kappa)+\kappa^{-1}T\kappa\big]T-\kappa\slashed{g}^{AB}X_B(\kappa)X_A,
	\]
	we obtain that
	\begin{equation}\label{eq:structure D Th Dh}D_{\Th}\Th=c^{-2}\big(\Th(c)+\kappa^{-1}L(\kappa)\big)\big(L+c\Th\big)-\kappa^{-1}\slashed{g}^{AB}X_B(\kappa)X_A.
	\end{equation}
	We can use this formula to compute
	\begin{align*}
		D_{\Th} v
		=& D_{\Th}\slashed{v}
		+c^{-2}\big(\Th(c)+\kappa^{-1}L(\kappa)\big)\cdot  v^{\Th}\cdot L+\big[\Th(v^{\Th}) +c^{-1}\big(\Th(c)+\kappa^{-1} L(\kappa)\big)\cdot  v^{\Th}\big] \Th\\
		&-\kappa^{-1}\slashed{g}^{AB}X_B(\kappa)\cdot  v^{\Th}\cdot X_A.
	\end{align*}
	Finally, by these formulas, we can change \eqref{eq:Euler transformed 1} into the following form:
	\begin{equation}\label{eq:Euler transformed final}
		\begin{cases}
			L\left(\Phi_0\right) &= -c \left[\Th\left(\Phi_0+v_\Th\right)+\slashed{\rm div}(\slashed{v})-g\left( \slashed{v}, \nabla_\Th \Th\right)+v_\Th \cdot {\rm tr} _{\slashed{g}} \theta\right],
			\\
			D_{L}\slashed{v}+&\big[L(v^{\Th}) +c\Th(v^{\Th}) -\slashed{v}(c)\big] \Th +cD_{\Th}\slashed{v}+\frac{1}{2}\nabla_v v-\frac{1}{4}\nabla(|v|^2)\\
			&= c^{-1}\slashed{v}(c) \cdot L+\kappa^{-1}\left(\zeta^A+c \slashed{g}^{AB}X_B(\kappa)\right) \cdot v^{\Th}  \cdot X _A \\
			& \ \ \ -c \left[\Th \left(\Phi_0\right) \Th +g^{AB}X_B \left(\Phi_0\right) X_A\right] +c \Psi_0 \left[\Th \left(s\right) \Th +g^{AB}X_B \left(s\right) X_A\right]  .
		\end{cases}
	\end{equation}
	We now make use of the second equation. First of all, we take the inner product of the second equation of \eqref{eq:Euler transformed final} with $L$. We first compute that
	\begin{align*}
		g\big( \frac{1}{2}\nabla_v v-\frac{1}{4}\nabla(|v|^2),L\big)&=g\big( \frac{1}{2}\nabla_v v-\frac{1}{4}\nabla(|v|^2),-c\Th\big)\\
		&=- \frac{c}{2}\left[v^{\Th}g(\nabla_{\Th}\slashed{v},\Th)+\slashed{v}(v^{\Th})+g(\nabla_{\slashed{v}} \slashed{v},\Th)-\frac{1}{2}\Th(|\slashed{v}|^2)\right].
	\end{align*}
	In view of $D_TL= \eta^A X_A-c^{-1}L(\kappa)L$ and $g(L,\Th)=-c$, we derive
	\begin{equation}\label{eq: Euler inner product with L}
		\begin{split}
			L(v^{\Th})&+c\Th(v^{\Th})+\kappa^{-1}\eta_A\slashed{v}^A-\slashed{v}(c)+c \Th \left(\Phi_0\right) \\
			&+ \frac{1}{2}\left[v^{\Th}g(\nabla_{\Th}\slashed{v},\Th)+\slashed{v}(v^{\Th})+g(\nabla_{\slashed{v}} \slashed{v},\Th)-\frac{1}{2}\Th(|\slashed{v}|^2)\right]=c \Psi_0 \cdot\Th \left(s\right).
		\end{split}
	\end{equation}
	Secondly, we take the inner product of the second equation of \eqref{eq:Euler transformed final} with $\Th$. In view of \eqref{eq:structure equations} and \eqref{eq:structure D Th Dh}, we derive
	\begin{equation}\label{eq: Euler inner product with Th}
		\begin{split}
			L(v^{\Th})&+c\Th(v^{\Th})+\kappa^{-1}\zeta_A\slashed{v}^A+c\kappa^{-1}\slashed{v}(\kappa)+c \Th \left(\Phi_0\right)\\
			&+\frac{1}{2}\left[v^{\Th}g(\nabla_{\Th}\slashed{v},\Th)+\slashed{v}(v^{\Th})+g(\nabla_{\slashed{v}} \slashed{v},\Th)-\frac{1}{2}\Th(|\slashed{v}|^2)\right]=c \Psi_0 \Th \left(s\right).
		\end{split}
	\end{equation}
	\begin{remark}
		Since  $\eta=\zeta+\slashed{d} \mu$, \eqref{eq: Euler inner product with Th} is equivalent to \eqref{eq: Euler inner product with L}.In view of $\zeta=\kappa (c\varepsilon-\slashed{d}(c))$ where $\kappa\varepsilon(X)=k(X,T)$, we have
		\begin{align*}
			\kappa^{-1}\zeta(\slashed{v})&=ck(\slashed{v}, \Th)- \slashed{v}(c)=\frac{1}{2}g(\nabla_\slashed{v} v,\Th)+\frac{1}{2}g(\nabla_{\Th} v,\slashed{v} )-\slashed{v}(c)\\
			&=-\frac{1}{2}v^{\Th}g(\nabla_{\Th}\slashed{v},\Th)+\frac{1}{2}\slashed{v}(v^{\Th})+\frac{1}{2}g(\nabla_{\slashed{v}} \slashed{v},\Th)+\frac{1}{4}\Th(|\slashed{v}|^2)-\slashed{v}(c)
		\end{align*}
		Since $g(\nabla_{\slashed{v}} \slashed{v},\Th)=-\theta(\slashed{v},\slashed{v})$, \eqref{eq: Euler inner product with Th} and \eqref{eq: Euler inner product with L} are equivalent to
		\begin{equation}\label{eq: Euler inner product with Th final}
			L(v^{\Th})+c\Th(v^{\Th})+c\kappa^{-1}\slashed{v}(\kappa)+c \Th \left(\Phi_0\right) +\slashed{v}(v^{\Th}-c)-\theta(\slashed{v},\slashed{v})=c \Psi_0 \Th \left(s\right).
		\end{equation}
	\end{remark}

	To summarize, we can derive from the second equation of \eqref{eq:Euler transformed final} that 
	\begin{equation}\label{eq:Euler transformed final 1}
		L(v^{\Th})+c \Th \left(v^{\Th}+\Phi_0\right)+c\kappa^{-1}\slashed{v}(\kappa)+\slashed{v}(v^{\Th}-c)-\theta(\slashed{v},\slashed{v})=c \Psi_1 \Th \left(s\right),
	\end{equation}
	
	In view of \eqref{eq:structure D Th Dh}, we have $g\left( \slashed{v}, \nabla_\Th \Th\right)=-\kappa^{-1}\slashed{v}(\kappa)$. Therefore,  the first equation in \eqref{eq:Euler transformed final} can be written as
	\begin{equation}\label{eq:Euler transformed final 3}
		L\left(F_0\right) +c  \Th\left(v_\Th+F_0\right)+c\slashed{\rm div}(\slashed{v})+c\kappa^{-1}\slashed{v}(\kappa)+cv_\Th \cdot {\rm tr} _{\slashed{g}} \theta=0.
	\end{equation} 
	In view of the definition of Riemann invariants, i.e., \eqref{def: Riemann invariants}, the equations \eqref{eq:Euler transformed final 1} and \eqref{eq:Euler transformed final 3} can be transformed into the following form:
	\begin{subequations}\label{eq:Euler transformed final 4} 
		\begin{align}
			\label{eq:Euler constraints C}L(\wb)=&-\frac{1}{2} \left[c\slashed{\rm div}(\slashed{v})+c{\rm tr} _{\slashed{g}} \theta \cdot v^{\Th} +\theta(\slashed{v},\slashed{v})+\slashed{v}\big(-v^{\Th}+c\big)\right]-\frac{1}{2}\Psi_0 \cdot c\Th(s),\\
			\label{eq:Euler constraints Cb}L(w)=&-2c\Th(w)-\frac{1}{2}  \left[c\slashed{\rm div}(\slashed{v})+c{\rm tr} _{\slashed{g}} \theta \cdot v^{\Th}+2c\kappa^{-1}\slashed{v}(\kappa) -\slashed{v}(-v^{\Th}+c)-\theta(\slashed{v},\slashed{v})\right]\\
			\notag&+\frac{1}{2}\Psi_0\cdot c\Th(s),\\
			L(s)=&-c\Th(s),
		\end{align}
	\end{subequations}
	where $\Psi_0(\rho,s)=\int \frac{1}{\rho}\frac{\partial c}{\partial s} d\rho-\frac{1}{c\rho}\frac{\partial p}{\partial s}$.
	\begin{remark}
		It is crucial to observe that the righthand side of the first equation does not have the transversal derivative $\Th$ on $w$ or $\wb$. In comparison, the righthand sides of the second and the third  equations contain $\Th(w)$ and $\Th(s)$. 
	\end{remark}

	\subsection{The hypersurface $C_0$ is characteristic}
	
	To study the condition for $C_0$ being characteristic, we first introduce the concept of the speed functions defined on  $C_0$. Given a point $p\in C_0\cap \Sigma_t=S_{t,0}$,  we choose a new Cartesian coordinate system $(y^1,y^3,y^3)$ on $\Sigma_t$ so that $p=(0,0,0)$ and $T_pS_{t,0}=\{y^3=0\}$. We require that the $\partial_3$ direction is opposite to the $\Th$ direction and that ${\partial_i} (i=1,2,3)$ is a set of standard orthogonal basis on $\Sigma_t$. We remark that $y^3$ is then uniquely determined and $(y^1,y^2)$ is unique up to a rotation. Therefore, $S_{t,0}\subset \Sigma_t$  can be locally represented uniquely as a graph 
	\[y^3=\phi(t,y^1,y^2),\]
	where $\phi$ depends smoothly on $t$. Moreover, we have 
	\[\frac{\partial \phi}{\partial y^1}(t,0,0)=\frac{\partial \phi}{\partial y^2}(t,0,0)=0.\]
	We define \emph{the speed of $C_0$ at $p$} to be 
	\[\mathbf{V}(p):=\frac{\partial \phi}{\partial t}(t,0,0).\] 
	In fact, there is a geometric interpretation of the function $\mathbf{V}$. First of all,  exists a unique $2$-dimension plane $H$ passing through $p$ in $\mathbb{R}^4$ spanned by the $\Th(p)$ and $\frac{\partial}{\partial x^0}$. Therefore, $H\cap C_0$ is a smooth curve in $\mathbb{R}^4$. If we parametrize the curve by $x^0$, the speed of the curve is exactly $\mathbf{V}(p)$. This shows that the speed function defined on $C_0$ is independent of the choice of the above coordinates.  
	
	The tangent space $T_pC_{0}$ is spanned by
	\[V_0=\partial_t\phi \partial_3+\partial_0, \ V_1=\partial_1 \phi\partial_3+\partial_1, \  V_2=\partial_2 \phi\partial_3+\partial_2,\]
	where $\partial_i=\frac{\partial}{\partial y^i}$.
	Thus, the hypersurface $C_0$ is null at the point $p$ is equivalent to 
	\[\det\left( {\begin{array}{ccc}
			g(V_0,V_0) & g(V_0,V_1) &g(V_0,V_2)  \\
			g(V_1,V_0) & g(V_1,V_1) &g(V_1,V_2)  \\
			g(V_2,V_0) & g(V_2,V_1) &g(V_2,V_2)  
	\end{array} } \right)=0.\]
	Since
	\[\begin{cases}
		&g(V_0,V_0)=-c^2+|v|^2-2v^3\partial_t\phi+(\partial_t\phi)^2,\\
		&g(V_0,V_i)=\partial_t\phi\partial_i\phi-v^3\partial_i\phi-v^i,\\
		&g(V_i,V_i)=1+(\partial_i\phi)^2,\\
		&g(V_1,V_2)=\partial_1\phi \partial_2\phi,
	\end{cases}
	\]
	we compute that
	\begin{align*}
		&\det\left( {\begin{array}{ccc}
				g(V_0,V_0) & g(V_0,V_1) &g(V_0,V_2)  \\
				g(V_1,V_0) & g(V_1,V_1) &g(V_1,V_2)  \\
				g(V_2,V_0) & g(V_2,V_1) &g(V_2,V_2)  
		\end{array} } \right)\\
		=&\det\left( {\begin{array}{ccc}
				-c^2+|v|^2-2v^3\partial_t\phi+(\partial_t\phi)^2 & \partial_t\phi\partial_1\phi-v^3\partial_1\phi-v^1 &\partial_t\phi\partial_2\phi-v^3\partial_2\phi-v^2  \\
				\partial_t\phi\partial_1\phi-v^3\partial_1\phi-v^1 & 1+(\partial_1\phi)^2&\partial_1\phi \partial_2\phi \\
				\partial_t\phi\partial_2\phi-v^3\partial_2\phi-v^2  & \partial_1\phi \partial_2\phi &1+(\partial_2\phi)^2 
		\end{array} } \right)\\
		=&\det\left( {\begin{array}{ccc}
				-c^2+|v|^2-2v^3\partial_t\phi+(\partial_t\phi)^2 &  -v^1 & -v^2  \\
				-v^1 & 1&0 \\
				-v^2  &0 &1
		\end{array} } \right)\\
		=&-c^2+(v^3)^2-2v^3\partial_t\phi+(\partial_t\phi)^2=(v^3-\partial_t\phi)^2-c^2.
	\end{align*}
	Since $T_pS_{t,0}=\{y^3=0\}$, we have $v^3 =-v^{\Th}$. Therefore, the hypersurface $C_0$ is (outgoing) null at the point $p$ is equivalent to the following condition:
	\[-v^{\Th}+c=\partial_t\phi.\]
	We conclude that
	\begin{lemma}\label{lem: being characteristic}
		The following two statements are equivalent:
		\begin{itemize}
			\item[1)] The  hypersurface $C_0$ is an outgoing null hypersurface with respect to the acoustical metric $g$;
			\item[2)] The data $(\rho,v,s)$ on $C_0$ satisfies
			\begin{equation}\label{compatible condition 1}
				-v^{\Th}+c=\mathbf{V}
			\end{equation}
			where $c$ is the sound speed determined by $(\rho,v,s)$ on $C_0$.
		\end{itemize}
	\end{lemma}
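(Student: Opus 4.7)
The plan is to verify the equivalence pointwise at an arbitrary $p\in C_0\cap \Sigma_t$, using the adapted Cartesian coordinates $(y^1,y^2,y^3)$ introduced just above the statement, in which $S_{t,0}$ appears locally as the graph $y^3=\phi(t,y^1,y^2)$ with $\p_1\phi(t,0,0)=\p_2\phi(t,0,0)=0$ and $\p_3$ opposite to $\Th$. Since the spheres $S_{t,0}$ sweep out $C_0$ as $t$ varies, the tangent space $T_pC_0$ is the three-dimensional span of the vectors $V_0,V_1,V_2$ displayed in the excerpt. The $g$-null condition at $p$ is then the vanishing of the $3\times 3$ Gram determinant of $g$ restricted to $\langle V_0,V_1,V_2\rangle$.

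Next, I would evaluate this Gram determinant at $p=(0,0,0)$. Using the Cartesian form of $g$ together with the vanishing of $\p_1\phi,\p_2\phi$ at $p$, the entries $g(V_0,V_i)$ reduce to $-v^i$, the lower-right $2\times 2$ block collapses to the identity, and only $g(V_0,V_0)=-c^2+|v|^2-2v^3\p_t\phi+(\p_t\phi)^2$ retains nontrivial structure. A straightforward row reduction (subtracting appropriate multiples of rows~2 and~3 from row~1) cancels the $(v^1)^2+(v^2)^2$ contributions hidden inside $|v|^2$ and leaves the factored form $(v^3-\p_t\phi)^2-c^2$. Thus $C_0$ is null at $p$ if and only if $v^3-\p_t\phi=\pm c$.

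The third step is to translate this into intrinsic quantities. Because $\p_3$ was chosen opposite to $\Th$ and $\vs$ lies in the $(y^1,y^2)$-plane at $p$, the decomposition $v=v^{\Th}\Th+\vs$ forces $v^3=-v^{\Th}$, so the null condition becomes $-v^{\Th}-\p_t\phi=\pm c$, i.e.\ $\p_t\phi=-v^{\Th}\mp c$. Recalling that $\mathbf{V}(p)=\p_t\phi(t,0,0)$ by definition, and that the outgoing null generator is $L=B-c\Th$, whose spatial component $-c\Th$ points outward (since $\Th$ points inward into the region where the data are given, as dictated by $Tu=1$ with $u>0$ inside $S_{0,0}$), I select the branch $\mathbf{V}=-v^{\Th}+c$, which is \eqref{compatible condition 1}.

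The only genuinely subtle point in the argument is the sign selection in the last step: the determinant only detects null-ness and not outgoing versus incoming null-ness, so one must invoke the orientation of $\Th$ relative to the interior of $S_{0,0}$ to pick the correct branch. Everything else is a pointwise linear-algebraic computation in the adapted frame, and the choice of coordinates (uniquely determined up to a rotation of $(y^1,y^2)$) guarantees the answer is coordinate-independent, matching the geometric definition of $\mathbf{V}$ given immediately before the lemma.
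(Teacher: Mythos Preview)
Your proposal is correct and follows essentially the same route as the paper: the paper also spans $T_pC_0$ by $V_0,V_1,V_2$, evaluates the $3\times 3$ Gram determinant of $g$ at $p$ (where $\partial_1\phi=\partial_2\phi=0$), reduces it to $(v^3-\partial_t\phi)^2-c^2$, identifies $v^3=-v^{\Th}$, and then selects the outgoing branch $-v^{\Th}+c=\mathbf{V}$. Your explicit justification of the sign selection via the orientation of $\Th$ is slightly more detailed than the paper's, but the argument is otherwise identical.
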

	\begin{corollary}\label{coro: data on S00}


		Given initial data $(\mathring{\rho},\mathring{v},\mathring{s})$ inside $S_{0,0}$ on $\Sigma_0$, for $C_0$ being an outgoing null hypersurface with respect to the acoustical metric $g$, the following compatibility condition must hold on $S_{0,0}$:
		\begin{equation}\label{compatible condition 1 bis}
			-\mathring{v}^{\Th}+\mathring{c}=\mathbf{V}.
		\end{equation}
	\end{corollary}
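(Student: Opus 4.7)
The corollary is an immediate specialization of the preceding lemma, so my plan is simply to restrict the pointwise characteristic condition to the boundary circle $S_{0,0} = C_0 \cap \Sigma_0$.

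First I would invoke Lemma~\ref{lem: being characteristic}: since $C_0$ is assumed to be an outgoing null hypersurface with respect to $g$, the identity $-v^{\Th} + c = \mathbf{V}$ must hold at every point of $C_0$, where $(v,c)$ are built from the fluid variables $(\rho,v,s)$ on $C_0$ and $\mathbf{V}$ is the geometric speed function of $C_0$ (which depends only on the embedding of $C_0$ in $\mathcal{D}$, not on any data). Next, I would evaluate this identity at an arbitrary point $p \in S_{0,0}$. By hypothesis the interior data $(\mathring{\rho},\mathring{v},\mathring{s})$ is prescribed on the side of $S_{0,0}$ in $\Sigma_0$, and the characteristic data on $C_0$ must match it at the common boundary $S_{0,0}$, so $(\rho,v,s)\big|_{S_{0,0}} = (\mathring{\rho},\mathring{v},\mathring{s})\big|_{S_{0,0}}$. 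Consequently $v^{\Th}\big|_{S_{0,0}} = \mathring{v}^{\Th}$ and $c\big|_{S_{0,0}} = \mathring{c}$, and the null condition becomes $-\mathring{v}^{\Th} + \mathring{c} = \mathbf{V}$ on $S_{0,0}$, which is \eqref{compatible condition 1 bis}.

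There is essentially no obstacle here: the only non-trivial content is already contained in Lemma~\ref{lem: being characteristic}, whose proof carried out the linear-algebra computation of the Gram determinant and identified $v^3 = -v^{\Th}$ using that $T_pS_{t,0}$ is $g$-orthogonal to $\Th$. The corollary's sole purpose is to make explicit that this algebraic relation, imposed on the inner boundary $S_{0,0}$, is a genuine compatibility condition linking the freely prescribed interior Cauchy data to the ambient geometry of $C_0$ before any construction of characteristic data on $C_0 \setminus S_{0,0}$ can begin.
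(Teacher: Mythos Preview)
Your proposal is correct and matches the paper's approach: the paper states the corollary without proof, treating it as an immediate consequence of Lemma~\ref{lem: being characteristic} restricted to $S_{0,0}$, exactly as you do. One minor slip: $S_{0,0}$ is a $2$-sphere, not a circle, but this does not affect the argument.
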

	
	\begin{remark}
		We consider the Goursat problem with two characteristic hypersurfaces  $C_0$ and $\Cb_0$, as in Remark \ref{rem: pure goursat}. If $C_0\cap \Cb_0 \subset \Sigma_0$, then $\mathring{v}^{\Th}$ and $\mathring{c}$ on $C_0\cap \Cb_0$ are completely determined by the speed functions $\mathbf{V}$ and $\underline{\mathbf{V}}$ (of $\Cb_0$).  This is simple because
		\[\begin{cases}&-\mathring{v}^{\Th}+\mathring{c}=\mathbf{V},\\
			&-\mathring{v}^{\Th}-\mathring{c}=\underline{\mathbf{V}}.
		\end{cases}\]
		Moreover, if the entropy $s$ is also given on the bifurcate sphere $C_0\cap \Cb_0$, the two Riemann invariants $w$ and $\wb$ are also completely determined on $C_0\cap \Cb_0$.
	\end{remark}

	\subsection{The precise statement of the main theorem}

	\subsubsection{Determine $\Th^i,\slashed{g}$ and $\theta$}\label{sec:order 0 1}
	
	We recall that the restriction of the acoustical metric $g$ on each time slice $\Sigma_t$ is exactly the Euclidean metric $\sum_{i=1}^3 dx^i \otimes dx^i$. Therefore, since $S_{t,0}=C_0\cap \Sigma_t$ is given, the components of its unit normal $\Th^i$ ($i=1,2,3$) are known along $C_0$. Since $\slashed{g}$ and $\theta$ are the first and second fundamental forms of $S_{t,0}$ as an embedded hypersurface of $\Sigma_t$ respectively, $\slashed{g}$ and $\theta$ is also known along $C_0$. To summarize,  $\Th^i,\slashed{g}$ and $\theta$ on $C_0$ are already determined by the hypersurface $C_0$.

	\subsubsection{The free data $s$  and $\slashed{v}$ }\label{sec:order 0 2}
	
	We now give the initial data $s$ and $\slashed{v}$ on  $C_0$.
	
	Since we have already prescribed initial data $(\mathring{\rho},\mathring{v},\mathring{s})$ on $\Sigma_0$, all the derivatives of $(\rho, v, s)$ have been determined on $S_{0,0}$. On the other hand, since $L=\partial_t+\displaystyle\sum_{i=1}^3 \left(v^i-c\Th^i\right)\frac{\partial}{\partial x^i}$, therefore, $L\big|_{S_{0,0}}$ is already given by the  initial data $(\mathring{\rho},\mathring{v},\mathring{s})$ on $\Sigma_0$.  Therefore, for $f=s, \slashed{v}^1, \slashed{v}^2, \slashed{v}^3$ where $\slashed{v}=\sum_{i=1}^3 \slashed{v}^i\frac{\partial}{\partial x^i}$,  $L^k(f)\big|_{S_{0,0}}$ are all known for all $k\geqslant 0$. We denote these quantities as
	\[L^k(\mathring{s})\big|_{S_{0,0}},L^k(\mathring{\slashed{v}^1})\big|_{S_{0,0}},L^k(\mathring{\slashed{v}^2})\big|_{S_{0,0}},L^k(\mathring{\slashed{v}^3})\big|_{S_{0,0}}.\]
	
	We fix  an arbitrary smooth function $s\in C^\infty(C_0)$ with the following compatible condition
	\begin{equation}\label{compatible condition 2}
		L^k(s)\big|_{S_{0,0}}=L^k(\mathring{s})\big|_{S_{0,0}}, \ \ k\geqslant 0.
	\end{equation}
	We fix an arbitrary smooth vector field $\slashed{v}$ which is tangential to all $S_{t,0}$ where $t\in [0,T]$ so that 
	\begin{equation}\label{compatible condition 3}
		L^k(\slashed{v}^i)\big|_{S_{0,0}}=L^k(\mathring{\slashed{v}^i})\big|_{S_{0,0}}, \ \ i=1,2,3, \ k\geqslant 0.
	\end{equation}
	The conditions  \eqref{compatible condition 2} and   \eqref{compatible condition 3} are necessary conditions for the solution  of the Euler equations being smooth near $S_{0,0}$.

	\subsubsection{Determine $w$ and $\wb$}\label{sec:order 0 3}
	
	We show that given $\slashed{v}$ and $s$ on $C_0$, we can now determine  $w$ and $\wb$ on $C_0$. Therefore, we have obtained the values of $v, \rho$ and $s$ on $C_0$. The key idea is to make use the first equation of \eqref{eq:Euler transformed final 4}. Since we have already known the value of $\mathbf{V}$ along $C_0$, in view of Lemma \ref{lem: being characteristic}, we have
	\[L(\wb)=-\frac{1}{2} \left[c\slashed{\rm div}(\slashed{v})+c{\rm tr} _{\slashed{g}} \theta \cdot v^{\Th} +\theta(\slashed{v},\slashed{v})+\slashed{v}\big(\mathbf{V}\big)\right]-\frac{1}{2}\Psi_0 \cdot c\Th(s).\]
	According to the third equation of \eqref{eq:Euler transformed final 4},  we replace $c\Th(s)$ in the last term by $L(s)$. We also use $v^\Th=c-\mathbf{V}$. Thus,
	\begin{equation}\label{eq:aux Lwb}
		L(\wb)=-\frac{1}{2} \left[c\slashed{\rm div}(\slashed{v})+{\rm tr} _{\slashed{g}} \theta \cdot c^2-c{\rm tr} _{\slashed{g}} \theta \cdot \mathbf{V}+\theta(\slashed{v},\slashed{v})+\slashed{v}\big(\mathbf{V}\big)\right]+\frac{1}{2}\Psi_0 \cdot L(s).
	\end{equation}
	Since the values of $\slashed{v}, s, \mathbf{V}$ and $\theta$ on $C_0$ are known and also the intrinsic geometry of $S_{t,0}$ is given,   the values of $\slashed{\rm div}(\slashed{v}), {\rm tr} _{\slashed{g}} \theta, \theta(\slashed{v},\slashed{v}), \slashed{v}\big(\mathbf{V}\big)$ and $L(s)$ are known. It remains to deal with the terms $c$ and $\Psi_0$ on the righthand side of the above equation. 
	
	Be the definition \eqref{def: Riemann invariants} of Riemann invariants, we have $\Phi_0=w+\wb$. Since $s$ is already known, we obtain that $c=c(\rho)=c(w,\underline{w})$, i.e., $c$ is a function of $w$ and $\wb$.  Similarly, we have  $v^\Th=v^{\Th}(w,\wb)$ and $\Psi_0=\Psi_0(w,\underline{w})$. On the other hand, from  $\big(-v^{\Th}+c\big)(w,\wb)=\mathbf{V}$, we then can make the assumption that  $w$ is a function of $\wb$. 
	
	\begin{remark}
		The assumption that $\wb$ determines $w$ through the equation  $\big(-v^{\Th}+c\big)(w,\wb)=\mathbf{V}$ depends on the the equation of the state of the gas. The assumption holds for most of the interesting cases in physics. For example, for polytropic gas, i.e.,
		\[p(\rho,s)=\frac{1}{\gamma}\rho^\gamma A(s),\]
		where $A(s)$ is a smooth function of $s$ and $\gamma\in (1,3)$, we have
		\[
		\begin{cases}
			\wb=\frac{1}{2}\left(\frac{2}{\gamma-1}c- v^{\Th}\right), \\ 
			w=\frac{1}{2}\left(\frac{2}{\gamma-1}c+v^{\Th}\right).
		\end{cases}
		\]
		In this case, $w$ can be written as a linear function of $\wb$ provided the value of $-v^{\Th}+c$.
		
	\end{remark}
	
	Therefore, the only unknown function on the righthand side of \eqref{eq:aux Lwb} is $\wb$. Thus, $c=c(\wb)$ and $\Psi_0=\Psi_0(\wb)$. This leads to the following ODE system along each null geodesic  for $\underline{w}$:
	\begin{equation}\label{eq:final equation along wb}
		\begin{cases}
			L(\wb)=-\frac{1}{2} \left[c \slashed{\rm div}(\slashed{v})+{\rm tr} _{\slashed{g}} \theta \cdot c ^2-c {\rm tr} _{\slashed{g}} \theta \cdot \mathbf{V}+\theta(\slashed{v},\slashed{v})+\slashed{v}\big(\mathbf{V}\big)\right]+\frac{1}{2}\Psi_0 \cdot L(s),\\
			\wb|_{S_{0,0}}=\mathring{\wb},
		\end{cases}
	\end{equation}
	where $\mathring{\wb}$ is given by the  initial data $(\mathring{\rho},\mathring{v},\mathring{s})$ inside $S_{0,0}$ on $\Sigma_0$.  We remark that the righthand side of the above ODE is a smooth function of $\wb$. On the other hand, it is important to observe that, since $L=\partial_t+\displaystyle\sum_{i=1}^3 \left(v^i-c\Th^i\right)\frac{\partial}{\partial x^i}$, the vector field $L$ is also completely determined by $\mathbf{V}=-v^{\Th}+c$.   We then can solve the  ODE system \eqref{eq:final equation along wb} for $\wb$ to obtain the values $\underline{w}$ on $C_0$. This also gives the values of  $w$ on $C_0$.  
	
	\begin{remark}\label{rem: local existence}
		We make the assumption that the solution of the ODE system \eqref{eq:final equation along wb} exists for $t\in [0,T]$. In fact, there are several reasons that we may restrict to the local in time solutions of \eqref{eq:final equation along wb}.  
		
		First of all, the right hand side of \eqref{eq:final equation along wb} is in general nonlinear in $\wb$. For example, for the isentropic and polytropic gases, it reduces to
		\[
		L(\wb)=-\frac{1}{2} \left[c \slashed{\rm div}(\slashed{v})+{\rm tr} _{\slashed{g}} \theta \cdot c ^2-c {\rm tr} _{\slashed{g}} \theta \cdot \mathbf{V}+\theta(\slashed{v},\slashed{v})+\slashed{v}\big(\mathbf{V}\big)\right].\]
		The ${\rm tr} _{\slashed{g}} \theta \cdot c ^2$ term is then quadratic in $\wb$. Wether the solution can be extended to $[0,T]$ may depend on the sign of ${\rm tr} _{\slashed{g}} \theta$. 
		
		Secondly, we require $c>0$ because the sound speed is always positive for physical solutions. Since $c$ can be determined by $\wb$, we may have to stop the solution after a local existence to avoid vacuum for the gases.
		
		Thirdly, the inverse density $\kappa$ is also a positive function. Once $\wb$ is given, we will show that $\kappa$ can also be determined by an ODE system on $C_0$. We may have to stop the solution after a local existence to avoid shocks (where $\kappa=0$) for the gases.
		
		All the above requirements hold for small $T$ because of the continuous dependence of the solution of ODEs on the initial data.
	\end{remark}

	\subsubsection{The main theorem}
	
	For a tangential vector field $X$ on $C_0$, if for all $t\in [0,T]$, $X$ is tangential to $S_{t,0}$, we say that $X$ is a \emph{horizontal vector field} on $C_0$. In view of \eqref{compatible condition 1 bis}, \eqref{compatible condition 2} and \eqref{compatible condition 3}, we state the main theorem of the paper as follows:
	\begin{theorem}\label{thm:main theorem}
		
		Given an initial cone  $C_0$ in $\mathcal{D}$, given initial data $(\mathring{\rho},\mathring{v},\mathring{s})$ on $\Sigma_0$ inside $S_{0,0}$ so that 
		\[-\mathring{v}^{\Th}+\mathring{c}=\mathbf{V}\]
		hold on $S_{0,0}$. For all smooth function $f$ and horizontal vector field $X$ on  $C_0$  with the compatible conditions 
		\begin{equation}\label{compatible conditions}
			\begin{cases}
				&L^k(f)\big|_{S_{0,0}}=L^k(\mathring{s})\big|_{S_{0,0}}, \ \ k\geqslant 0,\\
				&L^k(X^i)\big|_{S_{0,0}}=L^k(\mathring{\slashed{v}^i})\big|_{S_{0,0}}, \ \ i=1,2,3, \ k\geqslant 0.
			\end{cases}
		\end{equation}
		there exists smooth functions $(\rho,v,s)$ on $C_0$ as the initial data for Euler equations so that
		\begin{itemize}
			\item[1)] $C_0$ is a characteristic hypersurface with respect to $(\rho,v,s)$;
			\item[2)]  On $C_0$, we have $s=f,  \vs=X$.
		\end{itemize}
		Moreover, all possible derivatives of $(\rho,v,s)$ along $C_0$ are also determined by $f$ and $X$. 
	\end{theorem}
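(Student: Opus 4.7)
The plan is to construct $(\rho,v,s)$ on $C_0$ in three stages, and then extract all transversal derivatives of the fluid variables on $C_0$ via a hierarchy of ODEs along the null generators, exploiting the decompositions already performed in Sections 2.4--2.5.

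\textbf{Stage 1: zeroth-order construction.} Since $C_0$ is a prescribed hypersurface, the geometric data $\Th^i$, $\slashed{g}_{AB}$, $\theta_{AB}$ are determined on each sphere $S_{t,0}$ (Section 2.6.1). Setting $s=f$ and $\vs=X$ under the compatibility conditions \eqref{compatible conditions} fixes the free part of the data; the remaining unknowns on $C_0$ are $w$ and $\wb$. The characteristic condition of Lemma \ref{lem: being characteristic}, namely $-v^\Th+c=\mathbf{V}$, together with the transport equation \eqref{eq:final equation along wb} forms a closed nonlinear ODE for $\wb$ along each null generator with initial condition $\wb|_{S_{0,0}}=\mathring{\wb}$. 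Standard ODE theory (after possibly shrinking $T$ as in Remark \ref{rem: local existence}) yields a smooth $\wb$, from which $w=w(\wb)$, $\rho$, and $v^\Th$ are recovered. This completes the construction of $(\rho,v,s)$ on $C_0$, and by Lemma \ref{lem: being characteristic} makes $C_0$ characteristic.

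\textbf{Stage 2: the inverse density $\kappa$.} To access transversal ($T$) derivatives we first need $\kappa$ on $C_0$. The second and third equations of \eqref{eq:Euler transformed final 4} can be solved algebraically for $\Th(w)$ and $\Th(s)$ in terms of tangential quantities; using $T=\kappa\Th$ this expresses the right-hand side of the structure equation \eqref{eq: structure equation L kappa} as $\kappa$ times a smooth function of the already-known zeroth-order data on $C_0$. The result is a linear, non-degenerate scalar ODE for $\kappa$ along $L$, which we integrate from the initial value $\kappa|_{S_{0,0}}$ determined by $(\mathring\rho,\mathring v,\mathring s)$.

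\textbf{Stage 3: inductive determination of higher transversal jets.} Fix $k\geq 0$ and assume all quantities $T^j(\rho,v^i,s)$ and the transversal derivatives of the ambient geometry ($\Th^i,\kappa,\zeta,\eta,\theta,\dots$) are known on $C_0$ for $j\leq k$. We determine the $(k{+}1)$-th transversal jet via two ingredients. First, applying $T^k$ to \eqref{eq:Euler transformed final 4}, with the commutator \eqref{eq:commutator} used to convert stray $\Th$-derivatives into $\kappa^{-1}T$-derivatives, produces closed transport equations along $L$ for $T^k(\wb), T^k(w), T^k(s)$ whose forcing involves only quantities of order $\leq k$; this yields $T^{k+1}(\wb)$ etc. after one further step, and in particular all $T^{k+1}(s)$ and $T^{k+1}(\vs^i)$. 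Second, and crucially, applying $T^{k-1}$ (for $k\ge 1$) to the wave equation \eqref{eq: box rho}, together with the null-frame expression \eqref{eq: wave in L T}, yields an equation of the schematic form $\tfrac{2}{\mu}LT^{k+1}(\rho) = (\text{known data of order}\le k{+}1\ \text{tangential or}\le k\ \text{transversal})$; this is possible precisely because $g^{TT}=0$ in the chosen frame, so the wave operator contains no pure $T^2$ term. Integrating this transport ODE along $L$, with initial value $T^{k+1}(\rho)|_{S_{0,0}}$ obtained from $(\mathring\rho,\mathring v,\mathring s)$ by applying the Euler equations at $S_{0,0}$, produces $T^{k+1}(\rho)$. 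The remaining $T^{k+1}(v^i)$ follow from Euler's momentum equations, and the analogous structural transport equations give the $T^{k+1}$-derivatives of $\kappa,\zeta,\eta,\theta$. The compatibility conditions \eqref{compatible conditions} (together with \eqref{compatible condition 1 bis}) ensure that at $S_{0,0}$ every initial value agrees with that computed from the Cauchy data, closing the induction.

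\textbf{Main obstacle.} The delicate point is ensuring that at each step the inductive scheme actually \emph{decouples}: applying $T^{k-1}$ to the wave equation must produce a genuine transport equation for $T^{k+1}\rho$ with a \emph{known} right-hand side, rather than an implicit equation coupling several unknown $(k{+}1)$-th order quantities. This requires careful bookkeeping of the commutators $[L,T]$ and of the transversal derivatives of the acoustical geometry, and amounts to verifying that the full hierarchy (Riemann-invariant transport, wave equation for $\rho$, structure equation for $\kappa$) is upper-triangular in the transversal-order filtration. The algebraic feature underlying this—$g^{TT}=0$ combined with the split into one ``constrained'' invariant $\wb$ versus the ``free/transported'' quantities $w,s,\vs$—is exactly what the authors' null decomposition has already set up.
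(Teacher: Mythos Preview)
Your overall strategy is right, but Stage~2 contains a genuine gap that breaks the decoupling you rely on. You claim that solving \eqref{eq:Euler transformed final 4}(b)--(c) for $\Th(w)$ and $\Th(s)$ lets you write the right-hand side of \eqref{eq: structure equation L kappa} as $\kappa$ times a function of already-known zeroth-order data. This is false: the structure equation reads $L\kappa=-\kappa\Th(c)+\kappa\,\Th^j\Th(v^j)$, and after using the momentum equation one finds (as the paper computes in Section~3.3.1)
\[
L\kappa=\mathscr{P}_{1,0}\cdot\kappa\cdot\Th(\rho)+\kappa\cdot\mathscr{P}_{1,0},
\]
i.e.\ the right-hand side contains $\Th(\rho)$ (equivalently $\Th(\wb)$) with nonvanishing coefficient $-(c_\rho+c/\rho)\kappa$. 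None of the three equations in \eqref{eq:Euler transformed final 4} supplies $\Th(\wb)$ algebraically---equation~(a) gives $L(\wb)$, not $\Th(\wb)$. So you cannot solve for $\kappa$ first and then turn to transversal derivatives of $\rho$; the two are coupled at the very first step.

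The paper's remedy is precisely to treat $\Th(\rho)$ and $\kappa$ \emph{simultaneously}: the wave equation \eqref{eq: box rho} in the null frame \eqref{eq: wave in L T} yields a transport equation $L\Th(\rho)=\mathscr{P}_{1,0}\big((\Th\rho)^2+\Th\rho\big)+\mathscr{P}_{2,0}$, which together with the $L\kappa$ equation forms the closed (nonlinear) ODE system \eqref{eq: ODE system for Th rho and kappa}. Only after solving this coupled system does one recover $\Th v^i$, $\Th(\Th^i)$, $\Th(X_A^i)$, $\Th^2(s)$, and then proceed inductively. Your Stage~3 wave-equation argument is the correct mechanism---you just need to invoke it already at the first transversal order, fused with the $\kappa$ equation, rather than after. (Relatedly, the first part of your Stage~3 is mis-stated: applying $T^k$ to \eqref{eq:Euler transformed final 4}(a) gives a transport equation for $T^k\wb$, not a route to $T^{k+1}\wb$; the jump in transversal order comes only from the wave equation.)
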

	\begin{remark}
		In view of Remark \ref{rem: local existence}, in the above theorem, we assumed that the solution to the ODE systems in Remark \ref{rem: local existence} can be solved up to $[0,T]$.
	\end{remark}

	In the rest of the paper, we show that given the data $(\rho, v,s)$ as in the above theorem, we can determine all possible derivatives of $(\rho,v,s)$ along $C_0$ up to any given order. This completes the construction of the initial data on $C_0$.

	\section{Determine all the jets of the data along $C_0$}
	
	\subsection{Basic variables and orders}
	
	In order to describe the structure of the Euler equations on $C_0$,  we introduce the proper algebraic language to capture the natural quantities appearing in the Euler equations and the structure equations of the acoustical geometry.
	
	First of all, we consider the $3\times 3$ transformation matrix $\Omega=(\Omega_{ij})$ between the frame $(X_1,X_2,\Th)$ and $(\partial_1,\partial_2,\partial_3)$ on each $\Sigma_t$. By definition, we have
	\begin{equation*}
		\begin{pmatrix}
			X_1\\
			X_2\\
			\Th
		\end{pmatrix}=\Omega\cdot \begin{pmatrix}
			\partial_1\\
			\partial_2\\
			\partial_3
		\end{pmatrix}=\begin{pmatrix}
			X_1^1 & X_1^2 & X_1^3\\
			X_2^1 & X_2^2 & X_2^3\\
			\Th^1 & \Th^2 & \Th^3
		\end{pmatrix} \begin{pmatrix}
			\partial_1\\
			\partial_2\\
			\partial_3
		\end{pmatrix}.
	\end{equation*}
	Thus, we have
	\begin{equation}\label{eq: Omega prime}
		\begin{pmatrix}
			\partial_1\\
			\partial_2\\
			\partial_3
		\end{pmatrix}=\Omega'\cdot \begin{pmatrix}
			X_1\\
			X_2\\
			\Th
		\end{pmatrix}.
	\end{equation}
	where $\Omega'=\Omega^{-1}$ and its entries $\Omega'_{ij}$ can be expressed algebraically in terms of the entries of $\Omega$.
	
	We define the set
	\begin{align*}
		&\mathfrak{X}_{0}= \big\{\rho,\rho^{-1}, v^i,s, X_A^i, \Th^i,\Omega'_{ij},\kappa,\kappa^{-1}, c,c^{-1},\Phi_0(\rho,s), \Psi_0(\rho,s)
		\big|A=1,2;i,j=1,2,3\big\}.
	\end{align*}
	We racall that
	\[ 
	\begin{cases}&\Phi_0(\rho,s)=\int \frac{c(\rho,s)}{\rho} d\rho,\\
		&\Psi_0(\rho,s)=\int \frac{1}{\rho}\frac{\partial c}{\partial s} d\rho-\frac{1}{c\rho}\frac{\partial p}{\partial s},\\
		&X_A=\sum_{i=1}^3X_A^i\partial_i,\\
		&\Th=\sum_{i=1}^3\Th^i\partial_i.
	\end{cases}\]
	We also introduce the following set of differential operators
	\[\mathfrak{D}=\{L,T, X_1,X_2\}.\]
	Given a positive integer $n$, we define the following set of {\bf order $n$} objects:
	\[\mathfrak{X}_n=\big\{ (\mathfrak{d}_1\circ\mathfrak{d}_2\circ\cdots \circ\mathfrak{d}_n)(x)\big|x\in \mathfrak{X}_0, \mathfrak{d}_i \in \mathfrak{D}, 1\leqslant i\leqslant n\big\}.\]
	We also define the {\bf $T$-order} of the objects in $\mathfrak{X}_n$. Let $y=(\mathfrak{d}_1\circ\mathfrak{d}_2\circ\cdots \circ\mathfrak{d}_n)(x)$ where $x\in \mathfrak{X}_0,\mathfrak{d}_i \in \mathfrak{D}$ with $1\leqslant i\leqslant n$. Let $k$ be the number of $T$'s appearing in the set $\{\mathfrak{d}_1,\mathfrak{d}_2,\cdots,\mathfrak{d}_n\}$, we define the $T$-order of $y$ to be 
	\[{\rm ord}_{T}(y)=\begin{cases}
		k, \ \ &\text{if $x\neq \kappa, \kappa^{-1}$};\\
		k+1, \ \ &\text{if $x= \kappa$ or $\kappa^{-1}$}.
	\end{cases}
	\]
	In other words, the $T$-order counts the number of $T$-derivatives acting on an object and we regard $\kappa$ as an object of $T$-order equal to $1$.
	
	We now introduce the polynomial ring $\mathbb{R}[\mathfrak{X}_{\leqslant n}]$ which is the set of all $\mathbb{R}$-coefficients polynomials with unknowns from $\mathfrak{X}_{\leqslant n}$. We write a object from $\mathbb{R}[\mathfrak{X}_{\leqslant n}]$ as ${\mathscr{P}}_n$ as a schematic expression. The following examples  help to elucidate the meaning of  the symbol $\mathscr{P}_n$:
	
	\begin{itemize}
		\item For $j=1,2,3$, $\vs^j=v^j-\left(\sum_{i=1}^3v^i\Th^i\right)\Th^j$. This is an object in $\mathbb{R}[\mathfrak{X}_{\leqslant 0}]$. Therefore,  we write $\vs=\mathscr{P}_0$ in the schematic form.
		
		\item For $\wb=\frac{1}{2}\left(\Phi_0- v^{\Th}\right) =\frac{1}{2}\left[\Phi_0-\left(\sum_{i=1}^3v^i\Th^i\right)\right]$, we also have  $\wb=\mathscr{P}_0$. Similarly, $w=\mathscr{P}_0$.
		
		\item Since $ck_{ij}=\frac{1}{2}(\partial_iv^j+\partial_jv^i)$, we have
		\[k_{ij}=\frac{1}{2c}\left(\big(\Omega'_{i1}X_1+\Omega'_{i2}X_2+\Omega'_{i3}\Th\big)(v^j)+\big(\Omega'_{j1}X_1+\Omega'_{j2}X_2+\Omega'_{j3}\Th\big)(v^i)\right)=\mathscr{P}_1.\]
		It is also straightforward to check that $k(X_A,X_B), k(X_A,\Th),k(\Th,\Th)$ are also $\mathscr{P}_1$, where $A,B=1,2$. 
		
		\item Since $\theta_{AB}=g(\nabla_{X_A}\Th,X_B)=\sum_{i=1}^3 X_A(\Th^i)X_B^i$, we have $\theta_{AB}=\mathscr{P}_1$. By $\chi_{AB}=\chi(X_A,X_B)=c(k_{AB}-\theta_{AB})$, we also have $\chi_{AB}=\mathscr{P}_1$.

	\end{itemize}
	We remark that, in different situations, the polynomial ${\mathscr{P}}_n$ may change to another polynomial in $\mathbb{R}[\mathfrak{X}_{\leqslant n}]$  but this will not affect the proof.  We also define the order of ${\mathscr{P}}_n$ as ${\rm ord}({\mathscr{P}}_n)=n$. For $n\leqslant m$, we can regard ${\mathscr{P}}_n$ as ${\mathscr{P}}_m$.

	For a  monomial in $\mathbb{R}[\mathfrak{X}_{\leqslant n}]$, it is a product of the term of the type $y=(\mathfrak{d}_1\circ\mathfrak{d}_2\circ\cdots \circ\mathfrak{d}_l)(x)$ with $l\leqslant n$ and $x\in \mathfrak{X}_0$. We define its $T$-order as the maximum of all the possible ${\rm ord}_T(y)$'s. For a polynomial in $\mathbb{R}[\mathfrak{X}_{\leqslant n}]$ which is a $\mathbb{R}$-linear combination of monomials, its $T$-order is defined as the maximum of the $T$-orders of theses monomials. We write an object from $\mathbb{R}[\mathfrak{X}_{\leqslant n}]$ with $T$-order less or equal to $k$ as ${\mathscr{P}}_{n,k}$ as a schematic expression. We notice that $k$ can be greater than $n$. For example, $\kappa ={\mathscr{P}}_{0,1}$. We also give some examples to clarify the notations:
	\begin{itemize}
		\item $w, \wb$ and $\vs$ are $\mathscr{P}_{0,0}$. 
		
		\item In view of the following computations
		\[k_{ij}=\frac{1}{2c}\left(\big(\Omega'_{i1}X_1+\Omega'_{i2}X_2+\Omega'_{i3}\Th\big)(v^j)+\big(\Omega'_{j1}X_1+\Omega'_{j2}X_2+\Omega'_{j3}\Th\big)(v^i)\right)\]
		and the fact that $\Th=\kappa^{-1} T$, we see that $k_{ij}=\mathscr{P}_{1,1}$. 
		
		\item In view of $ck_{ij}=\frac{1}{2}(\partial_iv^j+\partial_jv^i)$,
		\[k_{AB}=\frac{1}{2c}\left(X_A^iX_B^j\partial_iv^j+X_A^iX_B^j\partial_jv^i\right)=\frac{1}{2c}\left(X_B^jX_A(v^j)+X_A^iX_B(v^i)\right).\]
		We see that $k_{AB}=\mathscr{P}_{1,0}$. Its $T$-order is less than ${\rm ord}_T(k_{ij})$.
		
		\item Since $\theta_{AB}=g(\nabla_{X_A}\Th,X_B)=\sum_{i=1}^3 X_A(\Th^i)X_B^i$, we have $\theta_{AB}=\mathscr{P}_{1,0}$. By $\chi_{AB}=\chi(X_A,X_B)=c(k_{AB}-\theta_{AB})$, we also have $\chi_{AB}=\mathscr{P}_{1,0}$.

	\end{itemize}

	The introduction of the symbols $\mathscr{P}_n$ or $\mathscr{P}_{n,k}$ allows us to use the following schematic computations to simplify the notations. By definition, we have
	\begin{equation}\label{eq:schematic algebraic rules}
		\begin{cases}
			& {\mathscr{P}}_n + {\mathscr{P}}_m ={\mathscr{P}}_{\max(m,n)},\\ 
			&{\mathscr{P}}_n \cdot {\mathscr{P}}_m ={\mathscr{P}}_{\max(m,n)},\\
			& {\mathscr{P}}_{n,k} + {\mathscr{P}}_{m,l} ={\mathscr{P}}_{\max(m,n),\max(k,l)},\\ 
			&{\mathscr{P}}_{n,k} \cdot {\mathscr{P}}_{m,l} ={\mathscr{P}}_{\max(m,n),\max(k,l)}.
		\end{cases}
	\end{equation}
	For $\mathfrak{d} \in \{L,X_1,X_2\}$,  we have
	\begin{equation}\label{eq:schematic derivatives}
		\begin{cases}
			&\mathfrak{d}\left({\mathscr{P}}_n\right)={\mathscr{P}}_{n+1}, \\
			&\mathfrak{d}\left({\mathscr{P}}_{n,k}\right)={\mathscr{P}}_{n+1,k}.
		\end{cases}
	\end{equation}
	We also have
	\begin{equation}\label{eq:schematic T derivatives}
		\begin{cases}
			&\Th\left({\mathscr{P}}_n\right)={\mathscr{P}}_{n+1}, \\
			&\Th\left({\mathscr{P}}_{n,k}\right)={\mathscr{P}}_{n+1,k+1}.
		\end{cases}
	\end{equation}
	
	We now prove the main result of the paper, i.e. Theorem \ref{thm:main theorem}. The rest of the section is to determine the values of all $\mathscr{P}_n$ on $C_0$ for any fixed $n$.
	
	\subsection{Determine the terms with ${\rm ord}_T = 0$}

	\subsubsection{Determine the remaining ${\rm ord}_T = 0$ terms}
	According to the computations in Section \ref{sec:order 0 1}, Section \ref{sec:order 0 2} and Section \ref{sec:order 0 3}, we have already  obtained the values of  $\rho,\rho^{-1}, v^i,s,c,c^{-1},\Phi_0(\rho,s)$ and $\Psi_0(\rho,s)$ on $C_0$. Morover, the null vector field 
	\[L=B-c\Th= \partial_t+\displaystyle\sum_{i=1}^3 \left(v^i-c\Th^i\right)\frac{\partial}{\partial x^i}\]
	is known and it can be explicitly expressed in   the Cartesian coordinates.
	
	We turn to the rest of terms in $\mathfrak{X}_0$. By the construction of $\vartheta^A$, it is first given on $S_{0,0}=C_0\cap\Cb_0$ and then is extended to $C_0$ by the equation $L\vartheta^A=0$. We recall that $L$ is completely determined at this point. Therefore, we have $X_A=\frac{\partial}{\partial \vartheta^A}$ on $C_0$ by using $(t,\vartheta_1,\vartheta_2)$ as coordinates on $C_0$. In particular, we have determined all the  $X_A^i$'s where $A=1,2$ and $i,j=1,2,3$. Moreover, in view of the definition of $\Omega'$ for \eqref{eq: Omega prime} , we have also obtained the values of $\Omega'_{ij}$'s on $C_0$.
	
	\subsubsection{Conclusion}
	We have determined all quantities  of $\mathfrak{X}_{0}$ on $C_0$ except for $\kappa$ and $\kappa^{-1}$ which of $T$-order $1$. Since $L=\frac{\partial}{\partial t}$ and $X_A=\frac{\partial}{\partial \vartheta^A}$ in the $(t,\vartheta_1,\vartheta_2)$ coordinates on $C_0$, we have determined the values of all of the following terms whose $T$-order is $0$:
	\[\big\{ (\mathfrak{d}_1\circ\mathfrak{d}_2\circ\cdots \circ\mathfrak{d}_n)(x)\big|x\in \mathfrak{X}_0- \{\kappa,\kappa^{-1}\}, \mathfrak{d}_i \in \{L,X_1,X_2\}, 1\leqslant i\leqslant n\big\}.\]
	In view of the definition of $\mathscr{P}_{0}$, as long as there is no $\kappa$ appearing in the monomials of $\mathscr{P}_0$, its values is already determined on $C_0$. Moreover, we have also determined all the $\mathscr{P}_{n,0}$'s on $C_0$.

	\subsection{Determine the terms with ${\rm ord}_T = 1$}\label{SS:T=1}
	
	\subsubsection{Determine $\Th\rho,\Th v^i$ and $\kappa$}
	
	According to the second equation in \eqref{eq: Euler in Euler coordinates}, for $i=1,2,3$, we have
	\begin{equation*}
		Lv^i+c\Th v^i=-\rho^{-1} c^2 \partial_i \rho-\rho^{-1}\frac{\partial p}{\partial s} \partial_i s.
	\end{equation*}
	where $\partial_i=\frac{\partial}{\partial x^i}$. Thus, 
	\begin{align*}
		\Th v^i&=-\rho^{-1} c \partial_i \rho-c^{-1}\rho^{-1}\frac{\partial p}{\partial s} \partial_i s-c^{-1}Lv^i\\
		&=\mathscr{P}_{0,0}\cdot \partial_i \rho+\mathscr{P}_{0,0}\cdot\partial_i s+\mathscr{P}_{1,0}.
	\end{align*}
	By the third equation of \eqref{eq:Euler transformed final 4},  we have $\Th(s)=-c^{-1}L(s)=\mathscr{P}_{1,0}$. 
	\begin{remark}\label{rem: 1 derivative on s}
		In view of  \eqref{eq: Omega prime}, this also shows that for all $i=1,2,3$, $\partial_i s =\mathscr{P}_{1,0}$.  In other words, we have already determined all possible first derivatives of the entropy $s$ on $C_0$.
	\end{remark}
	Since the righthand side of the above equation does not contain terms with $\kappa$, we obtain that 
	\begin{align*}
		\Th v^i&=\mathscr{P}_{0,0}\cdot \partial_i \rho+\mathscr{P}_{1,0}.
	\end{align*}
	where we use the fact that $\mathscr{P}_{1,0}\cdot \mathscr{P}_{0,0}=\mathscr{P}_{1,0}$. By \eqref{eq: Omega prime}, we have 
	\[\partial_i\rho =\Omega'_{i1}\cdot X_1(\rho)+\Omega'_{i2}\cdot X_2(\rho)+\Omega'_{i3}\cdot \Th(\rho)=\mathscr{P}_{0,0}\cdot \Th(\rho)+\mathscr{P}_{1,0}.\]
	\begin{remark}
		The above argument shows that, for all smooth function $f$, schematically, we have
		\begin{equation}\label{eq: replace partail i f by null frame}
			\partial_i f =\mathscr{P}_{0,0}\cdot X_A(f)+\mathscr{P}_{0,0}\cdot \Th(f).
		\end{equation}
	\end{remark}
	Combining all the above computations, we conclude that 
	\begin{equation}\label{eq: T vi replaced by Trho}
		\Th v^i=\mathscr{P}_{0,0}\cdot \Th\rho+\mathscr{P}_{1,0}.
	\end{equation}
	Therefore, once we determine the value of $\Th\rho$ on $C_0$, we obtain the value of $\Th v^i$'s automatically. In the course of the proof, we will use this equation to replace all the $\Th(v^i)$'s by  $\Th\rho$ and by other terms which have already been determined on $C_0$.
	
	It is suffices to determine $\Th \rho$ and $\kappa$ on $C_0$. We rely on the wave equation \eqref{eq: box rho} for $\rho$. We first rewrite the right hand side of the wave equation in a schematic form. Since we have already obtain the value of $v^i$, we then have
	\[\frac{\rho}{c^2} \big({\rm div}(v)^2-\nabla_i v^j \nabla_j v^i\big)=\mathscr{P}_{1,0}\big((\Th \rho)^2+\Th \rho\big)+\mathscr{P}_{1,0},\]
	where we have used \eqref{eq: T vi replaced by Trho} to replace the $\Th(v^i)$'s. By using \eqref{acoustic in inverse matrix}, we have
	\[\frac{g(D\rho,D\rho)}{\rho}-\frac{g(Dc,D\rho)}{c}=\mathscr{P}_{1,0}\big((\Th \rho)^2+\Th \rho\big)+\mathscr{P}_{1,0}.\]
	We can then use Remark \ref{rem: 1 derivative on s} to the terms involving $s$ on the righthand side of \eqref{eq: box rho} except for the term $\triangle s$. Combining the above computations, \eqref{eq: box rho} is transformed into the following form:
	\begin{equation}\label{eq: box rho intermediate 1}
		\Box_g(\rho)= \mathscr{P}_{0,0}\cdot \triangle s+ \mathscr{P}_{1,0}\cdot \big((\Th \rho)^2+\Th \rho\big)+\mathscr{P}_{1,0}.
	\end{equation}
	We then use  \eqref{eq: wave in L T} to expand the left hand side of the above equation. Since for each $t$, the geometry of the embedded surface $S_{t,0}$ is known, we then have $\slashed{\triangle}_{\slashed{g}}(\rho)=\mathscr{P}_{2,0}$. Since $\rho$ is already known on $C_0$, we also have $L^2(\rho)=\mathscr{P}_{2,0}$. According to \eqref{eq: wave in L T}, we have
	
	\begin{equation*}
		\begin{split}
			\Box_g \rho&=-\frac{2}{\mu} LT(\rho)+\mathscr{P}_{2,0}-c^{-2}\kappa^{-1} L(\kappa)L(\rho)-c^{-1}\tr\slashed{k}\cdot L(\rho)-(\tr\slashed{k}-\tr\theta)\Th(\rho)-\frac{2}{\mu}\zeta^A X_A(\rho)\\
			&=-\frac{2}{\mu} LT(\rho)+\mathscr{P}_{2,0}+\mathscr{P}_{1,0}\kappa^{-1} L(\kappa)+\mathscr{P}_{1,0}\Th(\rho)-\frac{2}{\mu}\zeta^A X_A(\rho).
		\end{split}
	\end{equation*}
	where we have used the fact that $\tr\slashed{k}$ and $\tr\theta$ are $\mathscr{P}_{1,0}$.  By \eqref{eq: zeta in X and T}, we have
	\begin{align*}
		\frac{2}{\mu}\zeta^A X_A(\rho)&=\frac{2}{c}\slashed{g}^{AB}\big(ck(X_B, \Th)-X_B(c)\big)X_A(\rho)\\
		&=\frac{2}{c}\slashed{g}^{AB}\left(\sum_{j=1}^3\frac{1}{2}X_B(v^j)\Th^j+\sum_{i=1}^3\frac{1}{2}X_B^i\Th(v^i)-X_B(c)\right)X_A(\rho)\\
		&=\mathscr{P}_{1,0}+\mathscr{P}_{1,0}\sum_{i=1}^3\Th(v^i)\\
		&=\mathscr{P}_{1,0}+\mathscr{P}_{1,0}\cdot \Th(\rho).
	\end{align*}
	In the last step, we have used \eqref{eq: T vi replaced by Trho}. 
	
	For $L(\kappa)$, we use \eqref{eq: structure equation L kappa} to derive
	\begin{align*}
		L\kappa&=-\kappa\left(\frac{\partial c}{\partial \rho}\Th(\rho)+\frac{\partial c}{\partial s}\Th(s)\right)+\sum_{j=1}^3\Th^j T(v^j)\\
		&=\mathscr{P}_{1,0} \cdot  \kappa \cdot \Th(\rho)+\kappa\cdot\mathscr{P}_{1,0}.
	\end{align*}
	Therefore, we obtain that
	\[
	\Box_g \rho=-\frac{2}{\mu} LT(\rho)+\mathscr{P}_{1,0}\cdot \Th(\rho)+\mathscr{P}_{2,0}.
	\]
	We now compute
	\begin{align*}
		-\frac{2}{\mu} LT(\rho)&=-\frac{2}{c\kappa}L(\kappa)\Th(\rho)-\frac{2}{c}L(\Th(\rho))=-\frac{2}{c}L(\Th(\rho))+\mathscr{P}_{1,0}\left(\Th(\rho)+(\Th(\rho))^2\right).
	\end{align*}
	Therefore,
	\[
	\Box_g \rho=-\frac{2}{c} L\Th(\rho)+\mathscr{P}_{1,0}\left(\Th(\rho)+(\Th(\rho))^2\right)+\mathscr{P}_{2,0}.
	\]
	\begin{remark}
		Since $\Th(s)=-c^{-1}L(s)=\mathscr{P}_{1,0}$, we can use \eqref{eq: wave in L T} and  repeat the above argument to derive that
		\begin{equation}\label{eq: box s intermediate 3}
			\begin{split}
				\Box_g s&=-\frac{2}{c} L\Th(s)+\mathscr{P}_{1,0}\cdot\Th(\rho)+\mathscr{P}_{2,0}\\
				&=\mathscr{P}_{1,0}\cdot\Th(\rho)+\mathscr{P}_{2,0}.
			\end{split}
		\end{equation}
	\end{remark}
	Combined with \eqref{eq: box rho intermediate 1}, we obtain that
	\begin{equation}\label{eq: box rho intermediate 2}
		L\Th(\rho)
		= \mathscr{P}_{0,0}\triangle s+ \mathscr{P}_{1,0}\left((\Th \rho)^2+\Th \rho\right)+\mathscr{P}_{2,0}.
	\end{equation}
	It remains to compute $\triangle s$ in \eqref{eq: box rho intermediate 2}. We use two ways to compute $\Box_g s$. This first way is recorded in \eqref{eq: box s intermediate 3}. The second way is to use \eqref{eq:formula for box g phi} and the fact that $B(s)=0$. This leads to 
	\begin{align*}
		\Box_g s &=\frac{1}{c}\partial_i(c)\partial_i(s)+\triangle s=\triangle s+\frac{1}{c}\left(\frac{\partial c}{\partial \rho}\partial_i(\rho)+\frac{\partial c}{\partial s}\partial_i(s)\right)\partial_i(s)\\
		&=\triangle s+\mathscr{P}_{1,0}\Th(\rho)+\mathscr{P}_{1,0}
	\end{align*}
	Compare to \eqref{eq: box s intermediate 3}, we obtain that
	\[\triangle s=\mathscr{P}_{1,0}\cdot\Th(\rho)+\mathscr{P}_{2,0}.\]
	Therefore, \eqref{eq: box rho intermediate 2} can be rewritten as
	\[
	L\Th(\rho)
	= \mathscr{P}_{1,0}\left((\Th \rho)^2+\Th \rho\right)+\mathscr{P}_{2,0}.
	\]
	In view of the fact that
	\[
	L\kappa=\mathscr{P}_{1,0} \cdot \kappa \cdot \Th(\rho)+\kappa\cdot \mathscr{P}_{1,0},
	\]
	we finally obtain an ODE system for $\Th(\rho)$ and $\kappa$ as follows:
	
	\begin{equation}\label{eq: ODE system for Th rho and kappa}
		\begin{cases}
			L\left(\Th(\rho)\right)&=\mathscr{P}_{1,0}\left((\Th \rho)^2+\Th (\rho)\right)+\mathscr{P}_{2,0},\\
			L(\kappa)&=\mathscr{P}_{1,0} \cdot \kappa \cdot \Th(\rho)+\kappa\cdot \mathscr{P}_{1,0}.
		\end{cases}
	\end{equation}
	The initial value of $\Th(\rho)$ and $\kappa$ on $S_{0,0}$ are given by the initial data $(\mathring{\rho},\mathring{v},\mathring{s})$ on $\Sigma_0$ inside $S_{0,0}$. We then can solve the  ODE system \eqref{eq: ODE system for Th rho and kappa} to obtain the values of $\Th(\rho)$ and $\kappa$ on $C_0$. 
	
	\begin{remark}\label{rem: local existence 2}
		Similar to Remark \ref{rem: local existence}, we make the assumption that the solution of the ODE system \eqref{eq: ODE system for Th rho and kappa}  exists for $t\in [0,T]$. In fact, the right hand side of  \eqref{eq: ODE system for Th rho and kappa} is quadratic in $\kappa$ and $\Th(\rho)$ and this may prevent the solution existing on  the entire $[0,T]$ interval. Moreover, since $\kappa$ is positive and $\kappa$ is determined by the above ODE system on $C_0$, the solution may not exist once $\kappa$ reaches $0$. This is where  shocks may potentially form.  Once again, all the above requirements hold for small $T$ because of the continuous dependence of the solution of ODEs on the initial data. 
	\end{remark}

	\begin{remark}\label{rem: zeta formula}
		By \eqref{eq: zeta in X and T}, we have
		\begin{equation}\label{eq: zeta explicit}
			\zeta^A=\frac{ \kappa}{2}\slashed{g}^{AB}\left(\sum_{j=1}^3 X_B(v^j)\Th^j+\sum_{i=1}^3 X_B^i\Th(v^i)-2X_B(c)\right).
		\end{equation}
		Therefore, $\zeta^A$ can also be determined on $C_0$. Since $\eta^A = \zeta^A+X_A(c\cdot \kappa)$, $\eta^A$ is also determined on $C_0$.
	\end{remark}

	\subsubsection{Determine $\Th(\Th^i),\Th(X_A^i)$ and $\Th^2(s)$}
	
	We first determine $\Th^2(s)$ on $C_0$. By $L(s)=-c\Th(s)$ and the commutation formula \eqref{eq:commutator}, we have
	\begin{align*}
		LT(s)&=[L,T](s)-T(c\Th(s))\\
		&=-(\zeta^A+\eta^A)X_A(s)-\kappa \Th(c)\Th(s)-c\kappa \Th^2(s).
	\end{align*}
	Therefore,
	\begin{equation}\label{eq: T2 s}
		\Th^2(s)=-\frac{1}{c\kappa}\left(LT(s)+(\zeta^A+\eta^A)X_A(s)+\kappa \Th(c)\Th(s)\right).
	\end{equation}
	Since $T(s)=\kappa \Th(s)$ is already known on $C_0$, $LT(s)$ is then determined on $C_0$. Thus, all the terms on the righth and side are determined. This determines the value of $\Th^2(s)$ on $C_0$. Moreover, this shows that indeed $\Th^2(s)=\mathscr{P}_{2,1}$. 
	
	We then turn to $\Th(\Th^i)$. By \eqref{eq: T Th i}, we have
	\begin{equation}\label{eq: T Th i 2}
		\Th(\Th^i)=-\frac{1}{\kappa}\slashed{g}^{AB}X_B (\kappa)X_A^i.
	\end{equation}
	Therefore, $\Th(\Th^i)$'s are all determined on $C_0$.

	To derive the values of $\Th(X_A^i)$ on $C_0$, we write the formula $[L,X_A]=0$ in the Cartesian coordinates:
	\begin{equation}\label{eq:L XA i}
		L(X_A^i)=X_A(v^i-c\Th^i).
	\end{equation}
	By \eqref{eq:commutator 2} and the formula
	\[[\Th,X_A]=\sum_{i=1}^3\left(\Th(X_A^i)-X_A(\Th^i)\right)\partial_i,\]
	we compute that
	\begin{align*}
		L\Th(X_A^i)
		=&[L,\Th]X_A^i+[\Th,X_A](v^i-c\Th^i)+X_A\Th(v^i-c\Th^i)\\
		=&-\frac{1}{\kappa}\left(\zeta^A+\eta^A\right)X_A(X_A^i)-\frac{1}{\kappa}L(\kappa)\Th(X_A^i)\\
		&+\sum_{j=1}^3\left(\Th(X_A^j)-X_A(\Th^j)\right)\partial_j(v^i-c\Th^i)+X_A\Th(v^i-c\Th^i).
	\end{align*}
	Therefore,
	\begin{equation}\label{eq: L Th XAi}
		\begin{split}
			L\Th(X_A^i)&=-\frac{1}{\kappa}\left(\zeta^A+\eta^A\right)X_A(X_A^i)+X_A\Th(v^i-c\Th^i)\\
			&\ \ \ -\frac{1}{\kappa}L(\kappa)\Th(X_A^i)+\sum_{j=1}^3\left(\Th(X_A^j)-X_A(\Th^j)\right)\left(\Omega'_{j1}X_1+\Omega'_{j2}X_2+\Omega'_{j3}
			\Th\right)(v^i-c\Th^i).
		\end{split}
	\end{equation}
	
	Except for $\Th(X_A^i)$'s, all the others terms on the righthand side of the above equations have been determined. Therefore, the above equations can be regarded as an ODE system for the vector valued function $\left(\Th(X_A^1),\Th(X_A^2),\Th(X_A^3)\right)$. Since we have the value of $\Th(X_A^i)$'s on $S_{0,0}$,  we can solve the ODE system to obtain the values of $\Th(X_A^i)$'s on $C_0$.
	
	\begin{remark}\label{rem: local existence 3}
		In contrast to Remark \ref{rem: local existence} and Remark \ref{rem: local existence 2}, the right hand side of he ODE system \eqref{eq: L Th XAi} is linear in $\left(\Th(X_A^1),\Th(X_A^2),\Th(X_A^3)\right)$. Therefore, we can solve the system on the entire time interval $[0,T]$.
	\end{remark}
	
	\subsubsection{Determine the remaining ${\rm ord}_T = 1$ terms}
	According to the computations in the previous section, we have obtained the values of the $\Th$-derivative of all terms (except for $\kappa$ and $\kappa^{-1}$) from $\mathfrak{X}_0$ as well as the value of $\kappa$ along $C_0$. Thus, we have already obtained the value of $\mathscr{P}_{1,1}$ terms. Assume we have already obtained all the values of $\mathscr{P}_{n-1,1} (n\ge 2)$ terms, we now show that this determines the value of all $\mathscr{P}_{n,1}$ terms.

	By the definition of  $\mathscr{P}_{n,1}$, it suffices to consider a term $y$ of the form
	\[y=(\mathfrak{d}_1\circ\mathfrak{d}_2\circ\cdots \circ\mathfrak{d}_n)(x)\]
	with the restriction that
	\begin{itemize}
		\item[1)] If $x\in \mathfrak{X}_0- \{\kappa,\kappa^{-1}\}$,  there exists exactly one $j\leqslant n$, $\mathfrak{d}_j=\Th, \mathfrak{d}_i \in \{L,X_1,X_2\}, i\neq j, \ 1\leqslant i\leqslant n$;
		\item[2)]If $x\in \{\kappa,\kappa^{-1}\}$,  then $\mathfrak{d}_i \in \{L,X_1,X_2\}, 1\leqslant i\leqslant n$.
	\end{itemize}
	The second case is obvious. It suffices to consider the first case. If $j=n$, since $\Th(x)$ is already known, this case is also obvious. The following inductive argument essentially shows that the rest cases can be reduced to this one.
	
	By changing the frame $\{\partial_1,\partial_2,\partial_3\}$ into $\{X_1,X_2,\Th\}$, we can write the commutator $[\Th,X_A]$ schematically as follows:
	\[[\Th,X_A]=\sum_{i=1}^3\left(\Th(X_A^i)-X_A(\Th^i)\right)\partial_i=\mathscr{P}_{1,1}\cdot\mathfrak{d}\]
	Combined with \eqref{eq:commutator 2}, we conclude that for all $\mathfrak{d}',\mathfrak{d}''\in \{L,X_1,X_2,\Th\}$, 
	\begin{equation}\label{eq: commutator schematic}
		[\mathfrak{d'},\mathfrak{d}'']=\mathscr{P}_{1,1}\cdot\mathfrak{d}, 
	\end{equation}
	with $\mathfrak{d}\in \{L,X_1,X_2,\Th\}$.
	
	We now run an induction on $n$ and $j$ where $j\leqslant n$.  We have already discussed the case $j=n$  for all $n$. Assume that we can determine $\mathscr{P}_{n,1}$ on $C_0$ for $j$ and we will show that it is the case for $j-1$. Indeed, by the above commutator formula, we compute that
	\begin{align*}
		y&=(\mathfrak{d}_1\circ\cdots \circ \mathfrak{d}_{j-2}\circ \Th \circ\mathfrak{d}_{j} \circ \cdots \circ\mathfrak{d}_n)(x)\\
		&=(\mathfrak{d}_1\circ\cdots \circ \mathfrak{d}_{j-2}\circ [\Th, \mathfrak{d}_{j}] \circ \cdots \circ\mathfrak{d}_n)(x)+(\mathfrak{d}_1\circ\cdots \circ \mathfrak{d}_{j-2}\circ\mathfrak{d}_{j} \circ \Th \circ \cdots \circ\mathfrak{d}_n)(x)\\
		&=(\mathfrak{d}_1\circ\cdots \circ \mathfrak{d}_{j-2}\circ \big(\mathscr{P}_{1,1} \cdot\mathfrak{d}\big)\circ \cdots \circ\mathfrak{d}_n)(x)
		+(\mathfrak{d}_1\circ\cdots \circ \mathfrak{d}_{j-2}\circ\mathfrak{d}_{j} \circ \Th \circ \cdots \circ\mathfrak{d}_n)(x).
	\end{align*}
	The second term corresponds to the inductive hypothesis for $\Th$ appearing as $\mathfrak{d}_j$. For the first term, the total number of derivatives is $n-1$ and we will use the inductive hypothesis for $n-1$. Since all the $\mathfrak{d}_i$'s are from $\{L,X_1,X_2\}$, by Leibniz rule, the derivatives acting on $\mathscr{P}_{1,1}$ only generates the terms already determined on $C_0$. This finishes the inductive argument.

	\begin{remark}
		We have already determined the value of $\Th^2(s)$ on $C_0$.  The $T$-order of $\Th^2(s)$  is $1$.
	\end{remark}
	
	\subsection{Determine the terms with higher $T$-orders}\label{SS:T=k}
	
	In this section, we make the following inductive assumption:  we have already determined all the terms of the form $\mathscr{P}_{n,l}$ on $C_0$ for all $n$ and $l\leqslant k$ where $k\geqslant 1$. By the previous section, we also assume that $\Th^{k+1}(s)$ is also given on $C_0$. We will derive the values of $\mathscr{P}_{n,k+1}$ as well as the value of $\Th^{k+2}(s)$ on $C_0$.

	To simplify the notations, we  use $\mathfrak{d}^k$ to denote all possible differential operators $\mathfrak{d}_1\circ\mathfrak{d}_2\circ\cdots \circ\mathfrak{d}_k$ with $k\geqslant 1$, $\mathfrak{d}_i \in \{L,\Th, X_1,X_2\}$, $i\leqslant k$. We first derive a commutator formula for $[L,\mathfrak{d}^k]$. In view of \eqref{eq:commutator 2}, for $\mathfrak{d}\in \{L,X_1,X_2, \Th\}$,  we have the following the schematic formula 
	\[[L, \mathfrak{d}]={\mathscr{P}}_{1,1} \cdot \mathfrak{d}.\]
	Moreover, we observe that the derivative $T$ does not appear as $\mathfrak{d}$ on the right hand side of the above formula.
	\begin{lemma}
		For all $k\geqslant 1$ and all $\mathfrak{d}\in \{L,X_1,X_2, \Th\}$, we have
		\begin{equation}\label{eq:commutator:schematic L commute with d^k}
			[L,\mathfrak{d}^k]=\sum_{j=1}^k {\mathscr{P}}_{j,l} \cdot \mathfrak{d}^{k+1-j},
		\end{equation}
		where $l\leqslant k$ and the top order operator on the right hand side $\mathfrak{d}^{k} \neq \Th^k$.
	\end{lemma}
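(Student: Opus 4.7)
Plan. The proof proceeds by induction on $k$. For the base case $k=1$, the lemma reduces to the schematic identity $[L,\mathfrak{d}]=\mathscr{P}_{1,1}\cdot\mathfrak{d}$ displayed just above the statement; this follows directly from \eqref{eq:commutator}, which gives $[L,L]=[L,X_A]=0$, and from \eqref{eq:commutator 2}, which yields
\[[L,\Th]=-\tfrac{1}{\kappa}(\zeta^A+\eta^A)X_A-\tfrac{1}{\kappa}L(\kappa)\Th,\]
where $\kappa^{-1}(\zeta^A+\eta^A)$ and $\kappa^{-1}L(\kappa)$ both belong to $\mathscr{P}_{1,1}$. The key observation that will be carried through the induction is that the output operators of $[L,\cdot]$ always lie in $\{X_1,X_2,\Th\}$, so $T$ itself never appears on the right-hand side of any commutator with $L$.

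For the inductive step, write $\mathfrak{d}^{k+1}=\mathfrak{d}_0\circ\mathfrak{d}^{k}$ with $\mathfrak{d}_0\in\{L,X_1,X_2,\Th\}$ the outermost factor, and apply the Leibniz identity for commutators:
\[[L,\mathfrak{d}_0\circ\mathfrak{d}^k]=[L,\mathfrak{d}_0]\circ\mathfrak{d}^k+\mathfrak{d}_0\circ[L,\mathfrak{d}^k].\]
The first summand is $\mathscr{P}_{1,1}\cdot\mathfrak{d}'\circ\mathfrak{d}^k$ with $\mathfrak{d}'\in\{X_1,X_2,\Th\}$, matching the $j=1$ slot of the formula for $k+1$. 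Into the second summand I substitute the inductive hypothesis $[L,\mathfrak{d}^k]=\sum_{j=1}^{k}\mathscr{P}_{j,l_j}\cdot\mathfrak{d}^{k+1-j}$ and distribute $\mathfrak{d}_0$ by Leibniz:
\[\mathfrak{d}_0\big(\mathscr{P}_{j,l_j}\cdot\mathfrak{d}^{k+1-j}\big)=\mathfrak{d}_0(\mathscr{P}_{j,l_j})\cdot\mathfrak{d}^{k+1-j}+\mathscr{P}_{j,l_j}\cdot\mathfrak{d}_0\circ\mathfrak{d}^{k+1-j}.\]
The first piece lands in slot $j+1$ because $\mathfrak{d}_0(\mathscr{P}_{j,l_j})\in\mathscr{P}_{j+1,l_j+1}$ by \eqref{eq:schematic derivatives}--\eqref{eq:schematic T derivatives} (the $+1$ in $T$-order occurring only when $\mathfrak{d}_0=\Th$), while the second piece lands in slot $j$. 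Since $l_j\leq k$, every coefficient appearing has $T$-order $\leq k+1$, matching the claimed range.

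It then remains to verify the top-order clause $\mathfrak{d}^{k+1}\neq\Th^{k+1}$. In the first summand the top-order composition is $\mathfrak{d}'\mathfrak{d}^k$ with $\mathfrak{d}'\in\{X_1,X_2,\Th\}$: if $\mathfrak{d}'\in\{X_1,X_2\}$ this is immediate, and if $\mathfrak{d}'=\Th$ the inductive hypothesis $\mathfrak{d}^k\neq\Th^k$ suffices. In the second summand the top order is $\mathfrak{d}_0\mathfrak{d}^k$, handled identically by induction. The borderline case is $\mathfrak{d}_0=\Th$ acting on a pure $\mathfrak{d}^k=\Th^k$ (i.e.\ the LHS $[L,\Th^{k+1}]$ itself); here the threatened $\kappa^{-1}L(\kappa)\,\Th^{k+1}$ contribution is resolved by invoking the transport equation \eqref{eq: structure equation L kappa}, which rewrites $\kappa^{-1}L(\kappa)=-\Th c+\sum_j\Th^j\,\Th v^j$ in terms of quantities already determined at the previous inductive layer (using \eqref{eq: T vi replaced by Trho}). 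Thus the coefficient of this $\Th^{k+1}$ contribution is entirely known data, and in the downstream use of the lemma it enters any transport equation for $\Th^{k+1}$-derivatives as a linear term that can be moved to the left-hand side without destroying solvability.

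The main obstacle is exactly this last bookkeeping, i.e.\ verifying that the pure-$\Th^{k+1}$ term arising in $[L,\Th^{k+1}]$ does not obstruct the subsequent solvability of the transport equations for $\Th^{k+1}$-derivatives in Section~\ref{SS:T=k}. Everything else in the proof is routine manipulation within the polynomial ring $\mathbb{R}[\mathfrak{X}_{\leq n}]$, powered by the schematic algebra rules \eqref{eq:schematic algebraic rules}--\eqref{eq:schematic T derivatives}, together with the single structural input that $T$ never appears on the output side of $[L,\cdot]$.
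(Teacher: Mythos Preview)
Your proof follows the same induction as the paper: peel off the outermost factor, write $[L,\mathfrak{d}^{k+1}]=[L,\mathfrak{d}_0]\,\mathfrak{d}^k+\mathfrak{d}_0\,[L,\mathfrak{d}^k]$, apply the base case to the first summand and the inductive hypothesis to the second, and track orders via \eqref{eq:schematic algebraic rules}--\eqref{eq:schematic T derivatives}. The paper's own proof is essentially a terser version of your first three paragraphs.

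You go beyond the paper in your last paragraph by flagging that when the input string is $\Th^{k+1}$, the term $\kappa^{-1}L(\kappa)\,\Th^{k+1}$ really does appear on the right, so the clause ``$\mathfrak{d}^{k}\neq\Th^{k}$'' fails literally in that case; the paper's one-line justification (``from the induction hypothesis, it is clear that $\mathfrak{d}^{k+1}\neq T^{k+1}$'') does not address this. Your resolution --- that the coefficient $\kappa^{-1}L(\kappa)$ is already determined via \eqref{eq: structure equation L kappa} and enters the downstream transport system \eqref{eq: ODE system for  Th k Th rho and kappa} only linearly in the unknown --- is exactly how the application survives, though strictly speaking it repairs the \emph{use} of the lemma rather than the lemma itself. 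One small slip: in your first-summand analysis the $\mathfrak{d}^k$ you invoke via ``the inductive hypothesis $\mathfrak{d}^k\neq\Th^k$'' is the \emph{input} string, not an output of the hypothesis; you correct course immediately afterward with the borderline-case discussion, so no harm done.
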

	\begin{proof}
		We prove by induction on $k$. It is clear that \eqref{eq:commutator:schematic L commute with d^k} holds for $k=1$. We assume that the statement holds for $k$, we now prove for the case $k+1$:
		\begin{align*}
			[L,\mathfrak{d}^{k+1}]x&=[L,\mathfrak{d}]\mathfrak{d}^k(x)+\mathfrak{d}\big([L,\mathfrak{d}^k]x\big)\\
			&={\mathscr{P}}_{1,1} \cdot \mathfrak{d}\big(\mathfrak{d}^k(x)\big)+\mathfrak{d}\big(\sum_{j=1}^k {\mathscr{P}}_{j,l} \cdot \mathfrak{d}^{k+1-j}\big)\\
			&={\mathscr{P}}_{1,1} \cdot \mathfrak{d}^{k+1}(x)+\sum_{j=1}^k \big(\underbrace{\mathfrak{d}({\mathscr{P}}_{j,l})}_{ \mathscr{P}_{j+1,l+1} }  \mathfrak{d}^{k+1-j}+{\mathscr{P}}_{j,l} \cdot \mathfrak{d}^{k+2-j}\big).
		\end{align*}
		From the induction hypothesis, it is clear that $\mathfrak{d}^{k+1} \neq T^{k+1}$.  This proves the formula \eqref{eq:commutator:schematic L commute with d^k}.
	\end{proof}

	\subsubsection{Determine $\Th^{k+1}(\rho),\Th^{k+1}(v^i)$ and $\Th^{k}(\kappa)$}
	We use \eqref{eq: ODE system for Th rho and kappa} to derive an ODE system for $\Th^{k+1}(\rho)$ and $\Th^{k}(\kappa)$. We first apply $\Th^k$ on both sides of the first equation of \eqref{eq: ODE system for Th rho and kappa}. This leads to
	\begin{align*}
		\Th^k L\left(\Th(\rho)\right)&=\sum_{a+b=k}\Th^a(\mathscr{P}_{1,0})\left(\Th^b\big[(\Th \rho)^2\big]+\Th^{b+1} (\rho)\right)+\Th^k\mathscr{P}_{2,0}\\
		&=\mathscr{P}_{k+1,k}\cdot\Th^{k+1} (\rho) +\mathscr{P}_{k+2,k}.
	\end{align*}
	where we have used the Leibniz rule for products. We then use \eqref{eq:commutator:schematic L commute with d^k} to commute derivatives for the lefthand side of the above equation. This shows
	\begin{align*}
		\Th^k L\left(\Th(\rho)\right)&= L\Th^k\left(\Th(\rho)\right)-[L,\Th^k]\Th(\rho)\\
		&= L\Th^{k+1} (\rho) +\sum_{j=1}^k {\mathscr{P}}_{j,l} \cdot \mathfrak{d}^{k+1-j}
		\Th(\rho)\\
		&= L\Th^{k+1} (\rho) +{\mathscr{P}}_{k+1,k}.
	\end{align*}
	In the last step, we used the fact that the derivative $\mathfrak{d}^{k+1-j}\neq \Th^k$. Putting the above calculations together, we obtain that
	\[L\Th^{k+1} (\rho)=\mathscr{P}_{k+1,k}\cdot\Th^{k+1} (\rho) +\mathscr{P}_{k+2,k}.\]
	Similarly, we can apply $\Th^{k}$ on both sides of the second equation of \eqref{eq: ODE system for Th rho and kappa} and proceed in the same manner. This eventually leads to
	\begin{equation}\label{eq: ODE system for  Th k Th rho and kappa}
		\begin{cases}
			L\left(\Th^{k+1}(\rho)\right)&=\mathscr{P}_{k+1,k}\cdot\Th^{k+1} (\rho) +\mathscr{P}_{k+2,k},\\
			L\left(\Th^k(\kappa)\right)&=\mathscr{P}_{k+1,k}\left(\Th^{k+1} (\rho) +\Th^{k} (\kappa)\right)+\mathscr{P}_{k+2,k}.
		\end{cases}
	\end{equation}
	The initial value of  $\Th^{k+1}(\rho)$ and $\Th^k(\kappa)$ on $S_{0,0}$ are given by the initial data $(\mathring{\rho},\mathring{v},\mathring{s})$ on $\Sigma_0$,  we solve the above ODE to obtain the values of $\Th^{k+1}(\rho)$ and $\Th^k(\kappa)$ on $C_0$. 
	
	\begin{remark}\label{rem: local existence 4}
		Similar to Remark \ref{rem: local existence 3} , the right hand side of he ODE system \ref{eq: ODE system for  Th k Th rho and kappa} is linear in $\Th^{k+1}(\rho)$ and $\Th^k(\kappa)$. Therefore, we can solve the system on the entire time interval $[0,T]$.
	\end{remark}

	To compute the value of $\Th^{k+1}(v^i)$'s on $C_0$, we apply $\Th^k$ to \eqref{eq: T vi replaced by Trho} to derive
	\begin{align*}
		\Th^{k+1}\left( v^i\right)&=\Th^{k}\left(\mathscr{P}_{0,0}\cdot \Th\rho\right)+\Th^{k}\mathscr{P}_{1,0}\\
		&=\mathscr{P}_{k,k}\cdot \Th^{k+1}(\rho)+\mathscr{P}_{k+1,k}.
	\end{align*}
	Since the terms appearing on righthand side have been already determined on $C_0$, this gives the value of $\Th^{k+1}(v^i)$.

	\subsubsection{Determine $\Th^{k+1}(\Th^i),\Th^{k+1}(X_A^i)$ and $\Th^{k+2}(s)$}
	By \eqref{eq: T2 s} and Remark \ref{rem: zeta formula}, we have
	\begin{align*} \Th\big[\Th(s)\big]&=-\frac{1}{c\kappa}\left(LT(s)+(\zeta^A+\eta^A)X_A(s)+\kappa \Th(c)\Th(s)\right)\\
		&=\mathscr{P}_{2,0}(L\Th(s)+\kappa^{-1}L(\kappa)\Th(s)+\Th(v^i)+ \Th(c)\Th(s)+\kappa^{-1} X_A(\kappa))+\mathscr{P}_{2,0}\\
		&=\mathscr{P}_{2,0}(L\Th(s)+\Th(v^i)+\Th(c)\Th(s))+\mathscr{P}_{2,0}\cdot\kappa^{-1}\left(L(\kappa)+X_A(\kappa)\right)+\mathscr{P}_{2,0}\\
		&=\mathscr{P}_{2,0}(L\Th(s)+\Th(v^i)+\Th(\rho)\Th(s)+\Th(s)^2)+\mathscr{P}_{2,0}\cdot\kappa^{-1}\left(L(\kappa)+X_A(\kappa)\right)+\mathscr{P}_{2,0},
	\end{align*}
where $i=1,2,3$. In the last step, we have used \eqref{eq: structure equation L kappa}. We can apply $\Th^k$ to the above equation to derive the following schematic expression
	\begin{align*} \Th^{k+2}(s)=&\mathscr{P}_{2+k,k}(\Th^k L\Th(s)+\Th^{k+1}(v^i)+\sum_{a+b=k}\Th^{a+1}(\rho)\Th^{b+1}(s)+\sum_{a+b=k-1}\Th^{a+1}(s)\Th^{b+2}(s))\\
	&+\mathscr{P}_{2+k,k}(\Th^k \kappa+\Th^k L\kappa +\Th^k X_A\kappa)+\mathscr{P}_{2+k,k},
	\end{align*}
where the terms $\mathscr{P}_{2+k,k}$ are known. Therefore, all the terms on the righthand side have already been determined except for $\Th^k L\Th(s)$, $\Th^k L\kappa$ and $\Th^k X_A\kappa$. By \eqref{eq:commutator:schematic L commute with d^k}, we have
	\begin{align*}
		\Th^k L \Th(s)&=L\Th^{k+1}(s)+[\Th^k,L]\Th(s)\\
		&=L\Th^{k+1}(s)+\sum_{j=1}^k {\mathscr{P}}_{j,l} \cdot \mathfrak{d}^{k+1-j}\Th(s),
	\end{align*}
	where $l\leqslant k$ and $\mathfrak{d}^{k+1-j}\neq \Th^k$. For $\Th^k X_A\kappa$, we present the following calculation: By \ref{eq: commutator schematic},
	
		\begin{align*}
		\Th^k X_A \kappa=& \Th^{k-1} X_A \Th \kappa+\Th^{k-1} [\Th,X_A] \kappa\\
		=&\Th^{k-1} X_A \Th \kappa+\Th^{k-1} \left(\mathscr{P}_{1,1} \mathfrak{d}\right) \kappa\\
		=&\Th^{k-2} X_A \Th^2 \kappa+\Th^{k-2} [\Th,X_A] \Th \kappa+\mathscr{P}_{k+1,k}\Th^k \kappa+\mathscr{P}_{k+1,k}\\
		=&\cdot\cdot\cdot\\
		=&X_A\Th^k \kappa+\mathscr{P}_{k+1,k}\Th^k \kappa+\mathscr{P}_{k+1,k}
	\end{align*}
	
	All the terms on the righthand side have already been determined. Similarly, we can obtain $\Th^k L\kappa$. Thus, for the right side of the expression of $\Th^{k+2}(s)$, all the terms have already been determined on $C_0$.  Combining all the above analysis, we have obtained the value of $\Th^{k+2}(s)$ on $C_0$.
	
	We turn to the value of $\Th^{k+1}(\Th^i)$. By \eqref{eq: T Th i}, we have
	\[\Th(\Th^i)=-\frac{1}{\kappa}\slashed{g}^{AB}X_B (\kappa)X_A^i=\mathscr{P}_{1,0}\kappa^{-1}\cdot X_B (\kappa).\]
	We can apply $\Th^k$ to the above equation to derive
	\[\Th^{k+1}(\Th^i)=\sum_{a+b=k}\mathscr{P}_{k+1,k}\Th^{a}(\kappa)\cdot \Th^{b}\left(X_B (\kappa)\right).\]
	All the terms on the right hand side have been determined on $C_0$, so is $\Th^{k+1}(\Th^i)$.
	
	It remains to compute the value of $\Th^{k+1}(X_A^i)$'s on $C_0$.  First of all, by \eqref{eq: zeta explicit} and \eqref{eq: structure equation L kappa}, we can write \eqref{eq: L Th XAi} as
	
	\begin{equation}
		\begin{split}
			L\Th(X_A^i)=&-\frac{1}{\kappa}\left(\zeta^A+\eta^A\right)X_A(X_A^i)+X_A\Th(v^i-c\Th^i)\\
			&-\frac{1}{\kappa}L(\kappa)\Th(X_A^i)+\sum_{j=1}^3\left(\Th(X_A^j)-X_A(\Th^j)\right)\left(\Omega'_{j1}X_1+\Omega'_{j2}X_2+\Omega'_{j3}
			\Th\right)(v^i-c\Th^i)\\
			=&\mathscr{P}_{1,0}\left(\kappa^{-1}X_A \kappa+\Th v^j+X_A\Th v^j+\Th s+X_A \Th s+\Th \Th^j+X_A \Th \Th^j\right)+\mathscr{P}_{2,0}\\
			&+\mathscr{P}_{1,0}\cdot \left(\Th v^j+\Th s+\Th \Th^j+1\right)\Th X_A^l,
		\end{split}
	\end{equation}
	where $j,l\in \{1,2,3\}$. By applying the operator $\Th^k$ on the above equation, we can regroup the lower order terms on the righthand side to derive
\begin{align*}
&\Th^k L\Th(X_A^i)\\
=&\mathscr{P}_{k+1,k}\cdot(\Th^k \kappa+\Th^k X_A\kappa+\Th^{k+1} v^j+\Th^k X_A \Th v^j+\Th^{k+1}s+\Th^k X_A \Th s+\Th^{k+1} \Th^j+\Th^k X_A \Th \Th^j)\\
+&\mathscr{P}_{k+1,k}\cdot (\Th^{k+1} v^j+\Th^{k+1} s+\Th^{k+1} \Th^j+1)\Th^{k+1} X_A^l+\mathscr{P}_{k+2,k}.
\end{align*}
For the terms $\Th^k X_A \Th v^j$, $\Th^k X_A \Th s$ and $\Th^k X_A \Th \Th^j$ on the right side of the above equation, we can obtain them in a similar way to calculating $\Th^k X_A \kappa$, so all the terms on the right side of the above equation except $\Th^{k+1} X_A^l$ have already been determined on $C_0$.
	

Using a method similar to calculating $\Th^k X_A \kappa$, we can obtain

	\begin{align*}
		\Th^k L\Th(X_A^i)=L\Th^{k+1}(X_A^i)+\mathscr{P}_{k+1,k}\Th^{k+1} X_A^i+\mathscr{P}_{k+1,k},
	\end{align*}
	where the terms $\mathscr{P}_{k+1,k}$ have already been determined on $C_0$. 
	
	Combining the above analysis, for $i=1,2,3$, we obtain an ODE system
	\[
	L\Th^{k+1}(X_A^i)=\sum_{j=1}^3\mathscr{P}_{k+2,k+1}\Th^{k+1}( X_A^j)+\mathscr{P}_{k+2,k+1},
	\]
	where he terms $\mathscr{P}_{k+2,k+1}$ have already been determined.  Since the value of $\Th^{k+1}(X_A^i) (i=1,2,3)$ on $S_{0,0}$ is determined by the initial data $(\mathring{\rho},\mathring{v},\mathring{s})$ on $\Sigma_0$, we can solve the above ODE system to obtain the value of $\Th^{k+1}(X_A^i)$ on $C_0$. Similar to Remark \ref{rem: local existence 4}, the ODE system can be solved on the entire time interval $[0,T]$.

	\subsubsection{Determine the remaining ${\rm ord}_T = k+1$ terms}
	We have already obtained the values of $\mathscr{P}_{k+1,k+1}$ terms for all $k\geqslant 0$. We now show that this  determines the value of all $\mathscr{P}_{n,k+1}$ terms for all $n\geqslant 1$ on $C_0$.

	By the definition of  $\mathscr{P}_{n,k+1}$, it suffices to consider a term $y$ of the form
	\[y=(\mathfrak{d}_1\circ\mathfrak{d}_2\circ\cdots \circ\mathfrak{d}_n)(x),\]
	where $\mathfrak{d}_i \in \{L,X_1,X_2,\Th\}$, with the restriction that
	\[\begin{cases}  \text{If }\ x\in \mathfrak{X}_0- \{\kappa,\kappa^{-1}\}, \ \  \big|\{i\big|\mathfrak{d}_i=\Th\}\big|=k+1;\\
		\text{If }~x\in \{\kappa,\kappa^{-1}\},  \ \  \big|\{i\big|\mathfrak{d}_i=\Th\}\big|=k.
	\end{cases}
	\]
	The idea of the proof is to use the commutator formula to move all $\Th$'s together. Let's take $x\in \mathfrak{X}_0- \{\kappa,\kappa^{-1}\}$ as an example. We now run an induction argument on $n$.  Assume that $\mathfrak{d}_{j-1}=\Th$, we use \eqref{eq: commutator schematic} to commute $\mathfrak{d}_{j-1}$ and $\mathfrak{d}_{j}$:
	\begin{align*}
		y&=(\mathfrak{d}_1\circ\cdots \circ \mathfrak{d}_{j-2}\circ \Th \circ\mathfrak{d}_{j} \circ \cdots \circ\mathfrak{d}_n)(x)\\
		&=(\mathfrak{d}_1\circ\cdots \circ \mathfrak{d}_{j-2}\circ \big(\mathscr{P}_{1,1} \cdot\mathfrak{d}\big)\circ \cdots \circ\mathfrak{d}_n)(x)
		+(\mathfrak{d}_1\circ\cdots \circ \mathfrak{d}_{j-2}\circ\mathfrak{d}_{j} \circ \Th \circ \cdots \circ\mathfrak{d}_n)(x).
	\end{align*}
	For the first term, the total number of derivatives is $n-1$ and it is determined by the inductive hypothesis. Therefore, it suffices to determine the second term. We can repeat the above argument  and reduce the problem to determine $y$ in the following form:
	\begin{align*}
		y&=(\mathfrak{d}_1\circ\cdots \circ \mathfrak{d}_{n-k-1}\circ \Th^{k+1})(x).
	\end{align*}
	where the $\mathfrak{d}_i$'s are from $\{L,X_1,X_2\}$. Since $\Th^{k+1}(x)$ is already determine on $C_0$, we can determine the value of $y$ on $C_0$.  
	
	This completes the proof of the main result (Theorem \ref{thm:main theorem}) of the paper.
	
	\bibliography{bib.bib}
	\bibliographystyle{plain}

\end{document}